\newtheorem{theorem}{Theorem}
\newtheorem{conjecture}{Conjecture}
\newtheorem{corollary}{Corollary}
\newtheorem{definition}{Definition}
\newtheorem{example}{Example}
\newtheorem{lemma}{Lemma}
\newenvironment{proof}[1][Proof]{\noindent\textbf{#1.} }{\ \rule{0.5em}{0.5em}}
\begin{document}

\title{Eigenvalues of a $H$-generalized join graph operation constrained by vertex subsets}
\author{Domingos M. Cardoso and Enide A. Martins\thanks{
Work supported by {\it FEDER} founds through {\it COMPETE}--Operational Programme Factors of Competitiveness
and by Portuguese founds through the {\it Center for Research and Development in Mathematics and Applications} (University of Aveiro)
and the Portuguese Foundation for Science and Technology (``FCT--Funda\c{c}\~{a}o para a Ci\^{e}ncia e a Tecnologia''), within project
PEst-C/MAT/UI4106/2011 with COMPETE number FCOMP-01-0124-FEDER-022690 and also to the project PTDC/MAT/112276/2009. These authors also thanks the
hospitality of Departamento de Matem\'aticas, Universidad Cat\'olica del Norte, Chile, during the visit of which this research was
finalized.} \\
Departamento de Matem\'{a}tica\\
Universidade de Aveiro\\
Aveiro, Portugal
\and Maria Robbiano$^{\S}$\thanks{%
Research partially supported by Fondecyt - IC Project 11090211, Chile.} and
Oscar Rojo$^{\S}$\thanks{Research supported by Project Fondecyt 1100072, Chile. $^{\S}$These authors thanks the hospitality of
Departamento de Matem\'{a}tica, Universidade de Aveiro, Aveiro, Portugal, in which this research was started.} \\
Departamento de Matem\'aticas\\
Universidad Cat\'{o}lica del Norte\\
Antofagasta, Chile}
\date{}
\maketitle

\begin{abstract}
Considering a graph $H$ of order $p$, a generalized $H$-join operation of a family of graphs $G_1, \ldots, G_p$, constrained
by a family of vertex subsets $S_i \subseteq V(G_i)$, $i=1, \ldots, p,$ is introduced. When each vertex subset $S_i$ is
$(k_i,\tau_i)$-regular, it is deduced that all non-main adjacency eigenvalues of $G_i$, different from $k_i-\tau_i$, for $i=1, \ldots, p,$
remain as eigenvalues of the graph $G$ obtained by the above mentioned operation. Furthermore, if each graph $G_i$ of the
family is $k_i$-regular, for $i=1, \ldots, p$, and all the vertex subsets are such that $S_i=V(G_i)$, the $H$-generalized
join operation constrained by these vertex subsets coincides with the $H$-generalized join operation. Some applications on the
spread of graphs are presented. Namely, new lower and upper bounds are deduced and a infinity family of non regular graphs of order $n$
with spread equals $n$ is introduced.
\end{abstract}

\textit{AMS classification: }\ 05C50, 15A18

\textit{Keywords: }
Graph eigenvalues, spread of a graph; adjacency matrix;

\section{Notation and main concepts}
We deal with undirected simple graphs herein simply called graphs. For each graph $G$, the vertex set is denoted by $V(G)$ and its edge set by $E(G).$
Usually, we consider that the graph $G$ has order $n$, that is $V(G)=\{1, \ldots, n\}$. An edge with end vertices $i$ and $j$ is denoted by $ij$ and
then we say that the vertices $i$ and $j$ are adjacent or neighbors. The number of neighbors of a vertex $i$ is the degree of $i$ and the neighborhood
of $i$ is the set of its neighbors, $N_G(i)=\{j \in V(G): ij \in E(G)\}$. The maximum and minimum degree of the vertices of $G$ is denoted by $\Delta(G)$
and $\delta(G)$, respectively. The complement of $G$, denoted by $\overline{G}$ is such that $V(\overline{G})=V(G)$ and
$E(\overline{G})=\{ij: ij \not \in E(\overline{G})\}$. A path of length $p-1$, $P_p$, in $G$ is a sequence of vertices $i_1, \ldots, i_p$ all distinct
except, eventually the first and the last) and such that $i_ji_{j+1} \in E(G)$, for $j=1, \ldots, p-1$. If $i_1=i_p$, then it is a closed path usually
called cycle of length $p$ and denoted $C_p$.
A graph $G$ is connected if there is a path between each pair of distinct vertices. A complete
graph of order $n$, where each pair of distinct vertices is connected by an edge, is denote by $K_n$. The complement of $K_n$, $\overline{K}_n$,
is called the null graph. A graph $G$ is bipartite if $V(G)$ can be partitioned into two subsets $V_1$ and $V_2$ such that every edge of $G$ has
one end vertex in $V_1$ and the other one in $V_2$. This graph $G$ is called complete bipartite and it is denoted $K_{p,q}$, if $|V_1|=p$,
$|V_2|=q$ and each vertex of $V_1$ is connected with every vertex of $V_2$.\\
The adjacency matrix of a graph $G$, $A\left(G\right)=\left(a_{i,j}\right)$, is the $n\times n$ matrix
\begin{eqnarray*}
a_{i,j} &=& \left\{\begin{array}{cl}
                         1 & \text{if } ij \in E(G) \\
                         0 & \text{otherwise.}
                   \end{array}%
            \right.
\end{eqnarray*}%
Then the matrix $A\left(G\right)$ is a nonnegative symmetric with entries which are $0$ and $1$ and then all of its eigenvalues are real.
Furthermore, since all its diagonal entries are equal to $0$, the trace of $A(G)$ is zero. If $G$ has at least one edge, then
$A\left(G\right)$ has a negative eigenvalue not greater than $-1$ and a positive eigenvalue not less than the average degree
of the vertices of $G$. Considering any matrix $M$ we denote its spectrum (the multiset of the eigenvalues of $M$) by $\sigma(M)$.
The spectrum of the adjacency matrix of a graph $G$, $\sigma(A(G))$, is simply denoted  by $\sigma(G)$ and the eigenvalues of $A(G)$
are also called the eigenvalues of $G$. An eigenvalue $\lambda$ of a graph $G$ is called non-main if its associated
eigenspace, denoted $\varepsilon_{G}(\lambda),$ is orthogonal to the all one vector, otherwise is called main.\\
Usually, the multiplicities of the eigenvalues are represented in the multiset $\sigma(G)$ as powers in square
brackets. For instance, $\sigma(G)=\{\lambda_1^{[m_1]}, \ldots, \lambda_q^{[m_q]}\}$ denotes that $\lambda_1$
has multiplicity $m_1$, $\lambda_2$ has multiplicity $m_2$, and so on. Throughout the paper, the eigenvalues of a graph $G$
with $n$ vertices, $\lambda_1(G), \ldots, \lambda_n(G)$, are ordered as follows: $\lambda_1(G) \ge \cdots \ge \lambda_n(G)$.
If $\lambda$ is an eigenvalue of the graph $G$ and $u$ is an associated eigenvector, the pair $(\lambda,u)$ is called an eigenpair of $G$.\\
Considering a graph $G$ of order $n$ and a vertex subset $S \subseteq V(G)$, the characteristic vector of $S$ is the vector $x_S \in \{0,1\}^n$
such that $(x_S)_v = \left\{\begin{array}{ll}
                                   1 & \hbox{if } v \in S\\
                                   0 & \hbox{otherwise.}
                            \end{array}
                     \right.$
A vertex subset $S$ is $(k,\tau)$-regular if $S$ induces a $k$-regular graph in $G$ and every vertex out of $S$ has $\tau$ neighbors  in $S$, that is,
$$
|N_G(i) \cap S| = \left\{\begin{array}{ll}
                           k    & \hbox{if } i \in S\\
                           \tau & \hbox{otherwise.}
                         \end{array}
                  \right.
$$
When the graph $G$ is $k$-regular, for convenience, $S=V(G)$ is considered $(k,0)$-regular. There are several properties
of graphs related with $(k,\tau)$-regular sets (see \cite{CSZ2008, CR2007}). For instance, we may refer the following properties:
\begin{itemize}
\item A graph $G$ has a perfect matching if and only if its line graph has a $(0,2)$-regular set.
\item A graph $G$ is Hamiltonian if and only if its line graph has a $(2,4)$-regular set inducing a connected graph.
\item A graph $G$ of order $n$ is strongly regular with parameters $(n,p,a,b)$ if and only if $\forall v \in V(G)$ the vertex subset $S=N_G(v)$
      is $(a,b)$-regular in $G-v$ (where $G-v$ is the graph obtained from $G$ deleting the vertex $v$).
\end{itemize}

\section{Generalized join graph operation with vertex subset constraints}

Considering two vertex disjoint graphs $G_1$ and $G_2$, the join of $G_1$ and $G_2$ is the graph $G_1 \vee G_2$ such that
$V(G_1 \vee G_2)=V(G_1) \cup V(G_2)$ and $E(G_1 \vee G_2)=E(G_1) \cup E(G_2) \cup \{xy: x \in V(G_1) \wedge y \in V(G_2)\}$.
A generalization of the join operation was first introduced in \cite{Schwenk74} under the designation of \textit{generalized composition}
and more recently in \cite{CFMR2011} with the designation of \textit{$H$-join}, defined as follows:

Consider a family of $p$ graphs, $\mathcal{F}=\{G_1, \ldots, G_p\}$, where each graph $G_j$ has order $n_j$, for $j=1, \ldots, p,$
and a graph $H$ such that $V(H)=\{1, \ldots, p\}$. Each vertex $j \in V(H)$ is assigned to the graph $G_j \in \mathcal{F}$.
The $H$-join (generalized composition) of $G_{1},\ldots ,G_p$ is the graph
$G=\bigvee_{H}{\{G_{j}: j \in V(H)\}}$ ($H[G_1, \ldots, G_p]$) such that $V(G)=\bigcup_{j=1}^{p}{V(G_j)}$
and
\begin{equation*}
E(G)=\left( \bigcup_{j=1}^{p}{E(G_j)}\right) \cup \left( \bigcup_{rs\in E(H)}{\{uv:u\in V(G_{r}),v\in V(G_{s})\}}\right) .
\end{equation*}%
Now, we generalize the above $H$-join operation according to the next definition.

\begin{definition}\label{generalized_join_definition}
Consider a graph $H$ of order $p$ and a family of $p$ graphs $\mathcal{F}=\{G_1, \ldots, G_p\}$. Consider also a family of vertex subsets
$\mathcal{S}=\{S_1, \ldots, S_p\}$, such that $S_i \subseteq V(G_i)$ for $i=1, \ldots, p$. The $H$-generalized join operation of the family
of graphs $\mathcal{F}$ constrained by the family of vertex subsets $\mathcal{S}$, denoted by $\bigvee_{(H,\mathcal{S})}\mathcal{F}$,
produces a graph such that
\begin{eqnarray*}
V\left(\bigvee_{(H,\mathcal{S})}\mathcal{F}\right) &=& \bigcup_{i=1}^{p}{V(G_i)},\\
E\left(\bigvee_{(H,\mathcal{S})}\mathcal{F}\right) &=& \left(\bigcup_{i=1}^{p}{E(G_i)}\right) \cup \{xy: x \in S_i, y \in S_j, ij \in E(H)\}.
\end{eqnarray*}
\end{definition}

Notice that the particular case of the $H$-generalized join operation of the family of graphs $\mathcal{F}=\{G_1, \ldots, G_p\}$ constrained
by the family of vertex subsets $\mathcal{S}=\{V(G_1), \ldots, V(G_p)\}$, coincides with the above described $H$-generalized join operation.

\begin{example}\label{ex1}
The Figure~\ref{figura1} depicts an example of a $H$-generalized join operation, with $H=P_3$, of a family of graphs $\mathcal{F}=\{G_1, G_2, G_3\}$
constrained by the family of vertex subsets $\mathcal{S}=\{S_1, S_2, S_3\}$, where $S_1=\{a,b\},$ $S_2=\{d,f\},$ and $S_3=\{g,i,j\}.$
\begin{figure}[th]
\begin{center}
\unitlength=0.4mm
\begin{picture}(150,150)(100,-60)
%
%
%
\put(135,80){\circle*{4}} 
\put(135,87){\makebox(0,0){\small 1}}
\put(180,80){\circle*{4}} 
\put(180,87){\makebox(0,0){\small 2}}
\put(225,80){\circle*{4}} 
\put(225,87){\makebox(0,0){\small 3}}
\put(135,80){\line(1,0){90}} 
\put(180,70){\makebox(0,0){\small $H=P_3$}}
%
%
\put(105,25){\circle*{4}} 
\put(100,25){\makebox(0,0){\small a}}
\put(135,40){\circle*{4}} 
\put(135,47){\makebox(0,0){\small b}}
\put(135,10){\circle*{4}} 
\put(135,3){\makebox(0,0){\small c}}
\put(105,25){\line(2,1){30}} 
\put(105,25){\line(2,-1){30}} 
\put(135,-5){\makebox(0,0){\small $G_1$}}
%
%
\put(180,40){\circle*{4}} 
\put(180,47){\makebox(0,0){\small d}}
\put(180,25){\circle*{4}} 
\put(187,25){\makebox(0,0){\small e}}
\put(180,10){\circle*{4}} 
\put(180,3){\makebox(0,0){\small f}}
\put(180,40){\line(0,-1){30}} 
\put(180,-5){\makebox(0,0){\small $G_2$}}
%
%
\put(210,25){\circle*{4}} 
\put(217,25){\makebox(0,0){\small j}}
\put(225,40){\circle*{4}} 
\put(225,47){\makebox(0,0){\small i}}
\put(240,25){\circle*{4}} 
\put(247,25){\makebox(0,0){\small h}}
\put(225,10){\circle*{4}} 
\put(225,3){\makebox(0,0){\small g}}
%
\put(225,10){\line(1,1){15}}  
\put(225,40){\line(-1,-1){15}}
\put(210,25){\line(1,-1){15}} 
\put(225,40){\line(0,-1){30}}  
\put(240,25){\line(-1,1){15}} 
\put(225,-5){\makebox(0,0){\small $G_3$}}
%
%
\put(105,-45){\circle*{4}} 
\put(100,-45){\makebox(0,0){\small a}}
\put(135,-30){\circle*{4}} 
\put(135,-23){\makebox(0,0){\small b}}
\put(135,-60){\circle*{4}} 
\put(135,-67){\makebox(0,0){\small c}}
\put(105,-45){\line(2,1){30}} 
\put(105,-45){\line(2,-1){30}} 
%
\put(180,-30){\circle*{4}} 
\put(180,-23){\makebox(0,0){\small d}}
\put(180,-45){\circle*{4}} 
\put(187,-45){\makebox(0,0){\small e}}
\put(180,-60){\circle*{4}} 
\put(180,-67){\makebox(0,0){\small f}}
\put(180,-30){\line(0,-1){30}} 
%
\put(225,-60){\circle*{4}} 
\put(225,-67){\makebox(0,0){\small g}}
\put(240,-45){\circle*{4}} 
\put(247,-45){\makebox(0,0){\small h}}
\put(225,-30){\circle*{4}} 
\put(225,-23){\makebox(0,0){\small i}}
\put(210,-45){\circle*{4}} 
\put(217,-45){\makebox(0,0){\small j}}
\put(240,-45){\line(-1,1){15}} 
\put(225,-60){\line(1,1){15}}  
\put(225,-30){\line(-1,-1){15}}
\put(210,-45){\line(1,-1){15}} 
\put(225,-60){\line(0,1){30}}  
%
\put(135,-30){\line(1,0){45}} 
\put(135,-30){\line(3,-2){45}}
\put(180,-60){\line(3,2){45}} 
\put(105,-45){\line(5,1){75}} 
\put(105,-45){\line(5,-1){75}}
\put(180,-30){\line(3,-2){45}} 
\put(180,-30){\line(1,0){45}} 
\put(180,-60){\line(1,0){45}} 
\put(180,-30){\line(2,-1){30}} 
\put(180,-60){\line(2,1){30}} 
\put(175,-75){\makebox(0,0){\small $\bigvee_{(H,\mathcal{S})}\mathcal{F}$}}
\end{picture}
\end{center}
\caption{The $H$-generalized join operation of the family of graphs $\mathcal{F}=\{G_1, G_2, G_3\}$, constrained by the family of vertex subsets
         $\mathcal{S}=\{S_1, S_2, S_3\}$, where $S_1=\{a,b\} \subset V(G_1), S_2=\{d,f\} \subset V(G_2)$ and $S_3=\{g,i,j\} \subset V(G_3)$.}\label{figura1}
\end{figure}
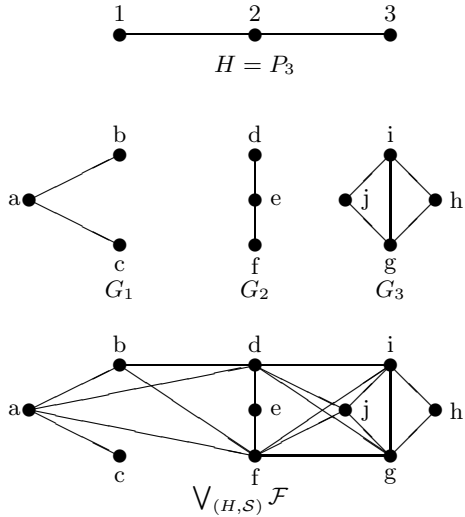
\end{example}

Now it is worth to recall the following result.
\begin{lemma}\cite{CSZ2008}\label{non-main}
Let $G$ be a graph with a $(\kappa ,\tau )$-regular set $S$, where $\tau > 0$, and $\lambda \in \sigma(A(G))$. Then, denoting the characteristic
vector of $S$ by $x_S$, $\lambda$ is non-main if and only if
$$
\lambda =\kappa - \tau  \qquad \text{ or } \qquad {\bf x}_S\in \left ({\mathcal E}_G(\lambda )\right )^{\bot},
$$
where $\left ({\mathcal E}_G(\lambda )\right )^{\bot}$ denotes the vector space orthogonal to the eigenspace associated to the eigenvalue $\lambda$.
\end{lemma}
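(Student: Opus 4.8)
The plan is to distill from the $(\kappa,\tau)$-regularity of $S$ a single linear identity relating $A(G)\mathbf{x}_S$, $\mathbf{x}_S$ and the all-ones vector $\mathbf{1}$, and then to test that identity against the eigenvectors belonging to $\lambda$, so that the two conditions in the disjunction fall out separately. Throughout, I would exploit that $A(G)$ is real symmetric, so it is self-adjoint for the standard inner product $\langle\cdot,\cdot\rangle$, and that $\lambda$ being non-main means exactly that $\langle u,\mathbf{1}\rangle=0$ for every $u\in{\mathcal E}_G(\lambda)$, while $\mathbf{x}_S\in({\mathcal E}_G(\lambda))^{\bot}$ means exactly that $\langle u,\mathbf{x}_S\rangle=0$ for every such $u$.

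First I would record the defining relation. Reading the definition of a $(\kappa,\tau)$-regular set entrywise, for $v\in S$ one has $(A(G)\mathbf{x}_S)_v=|N_G(v)\cap S|=\kappa$, while for $v\notin S$ one has $(A(G)\mathbf{x}_S)_v=\tau$. Hence
\[
A(G)\mathbf{x}_S=(\kappa-\tau)\mathbf{x}_S+\tau\mathbf{1},
\qquad\text{equivalently}\qquad
\bigl(A(G)-(\kappa-\tau)I\bigr)\mathbf{x}_S=\tau\mathbf{1}.
\]
This identity is the whole engine of the argument; it is the standard algebraic description of a $(\kappa,\tau)$-regular set.

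Next I would pair this identity with an arbitrary $u\in{\mathcal E}_G(\lambda)$. Using self-adjointness and $A(G)u=\lambda u$,
\begin{align*}
\tau\,\langle u,\mathbf{1}\rangle
&=\bigl\langle u,\,(A(G)-(\kappa-\tau)I)\mathbf{x}_S\bigr\rangle\\
&=\bigl\langle (A(G)-(\kappa-\tau)I)u,\ \mathbf{x}_S\bigr\rangle
=(\lambda-\kappa+\tau)\,\langle u,\mathbf{x}_S\rangle .
\end{align*}
Thus, for every $u\in{\mathcal E}_G(\lambda)$, one has the scalar identity $(\lambda-\kappa+\tau)\langle u,\mathbf{x}_S\rangle=\tau\langle u,\mathbf{1}\rangle$, which I will call $(\ast)$. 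The equivalence then follows by inspecting $(\ast)$. For sufficiency: if $\lambda=\kappa-\tau$, the left side of $(\ast)$ vanishes for all $u$, and since $\tau>0$ this forces $\langle u,\mathbf{1}\rangle=0$ for all $u$, i.e. $\lambda$ is non-main; if instead $\mathbf{x}_S\in({\mathcal E}_G(\lambda))^{\bot}$, then the left side of $(\ast)$ again vanishes and the same conclusion follows. For necessity: if $\lambda$ is non-main, the right side of $(\ast)$ is $0$ for all $u$, so $(\lambda-\kappa+\tau)\langle u,\mathbf{x}_S\rangle=0$; should $\lambda\neq\kappa-\tau$, this forces $\langle u,\mathbf{x}_S\rangle=0$ for all $u$, that is $\mathbf{x}_S\in({\mathcal E}_G(\lambda))^{\bot}$.

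As for difficulty, there is essentially no deep obstacle here: once the identity of the first step is in hand, the rest is a one-line linear-algebra reading of $(\ast)$. The only points demanding care are the explicit use of the hypothesis $\tau>0$ (without it the implication $\lambda=\kappa-\tau\Rightarrow\lambda$ non-main could fail, since the right side of $(\ast)$ could then be cancelled differently) and keeping the case split in the necessity direction clean, so that the disjunction in the statement is reproduced exactly. An equivalent route avoids pairing against individual eigenvectors by applying the orthogonal spectral projector $P_\lambda$ onto ${\mathcal E}_G(\lambda)$ to the identity; since $P_\lambda A(G)=A(G)P_\lambda$, one obtains $\tau P_\lambda\mathbf{1}=(\lambda-\kappa+\tau)P_\lambda\mathbf{x}_S$, from which the same three conclusions are read off.
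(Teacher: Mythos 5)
Your proof is correct. Note first that the paper itself gives no proof of this lemma: it is imported verbatim from \cite{CSZ2008}, so there is no internal argument to compare yours against; what follows is an assessment of your proposal on its own merits and against the standard argument in the literature. The engine of your proof, the identity $A(G)\mathbf{x}_S=(\kappa-\tau)\mathbf{x}_S+\tau\mathbf{1}$, is exactly the algebraic characterization of a $(\kappa,\tau)$-regular set used in \cite{CSZ2008} and in the related paper \cite{CR2007}, and pairing it with an arbitrary $u\in{\mathcal E}_G(\lambda)$ via self-adjointness of $A(G)$ to get $(\lambda-\kappa+\tau)\langle u,\mathbf{x}_S\rangle=\tau\langle u,\mathbf{1}\rangle$ is the standard route; your projector reformulation $\tau P_\lambda\mathbf{1}=(\lambda-\kappa+\tau)P_\lambda\mathbf{x}_S$ is an equivalent packaging of the same computation. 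Both directions of the equivalence are handled cleanly, and you correctly pinpoint the single place where $\tau>0$ is indispensable: without it, the vanishing of the left side of your identity $(\ast)$ says nothing about $\langle u,\mathbf{1}\rangle$, and indeed the lemma fails for $\tau=0$ (take $G$ $k$-regular and $S=V(G)$, which the paper declares $(k,0)$-regular: then $\lambda=\kappa-\tau=k$ is the Perron eigenvalue, which is main). So the proposal is a complete and essentially canonical proof of the cited lemma.
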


From now on, given a graph $H$, we denote
$$
\delta_{i,j}(H) = \left\{\begin{array}{ll}
                          1 & \hbox{if } ij \in E(H) \\
                          0 & \hbox{otherwise.}
                   \end{array}
                          \right.
$$

\begin{theorem}\label{nom-main-eigenvalues-join}
Consider a graph $H$ of order $p$ and a family of $p$ graphs $\mathcal{F}=\{G_1, \ldots, G_p\}$ such that $|V(G_i)|=n_i, i=1, \ldots, p$.
Consider also the family of vertex subsets $\mathcal{S}=\{S_1, \ldots, S_p\}$, where
$$
S_i \in \{S'_i \subseteq V(G_i):  \text{ either } S'_i \text{ or } V(G_i) \setminus S'_i \text{ is } (k_i,\tau_i)-\text{regular for some integers } k_i, \tau_i\},
$$
for $i=1, \ldots, p$. Let $G=\bigvee_{(H,\mathcal{S})}\mathcal{F}$.
If $\lambda \in \sigma(G_i) \setminus \{k_i-\tau_i\}$ for some $i \in \{1, \ldots, p\}$ is non-main, then $\lambda \in \sigma(G).$
\end{theorem}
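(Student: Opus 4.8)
The plan is to exhibit, for the given non-main eigenvalue $\lambda$ of $G_i$, an explicit eigenvector of $A(G)$ associated to $\lambda$ by padding an eigenvector of $G_i$ with zeros on all other blocks. First I would fix an ordering of $V(G)$ that lists the vertices of $G_1$ first, then those of $G_2$, and so on, so that $A(G)$ becomes a $p \times p$ block matrix whose diagonal blocks are the $A(G_j)$ and whose off-diagonal $(j,k)$ block is exactly $\delta_{j,k}(H)\, x_{S_j} x_{S_k}^\top$; this is immediate from Definition~\ref{generalized_join_definition}, since a vertex $u \in V(G_j)$ is adjacent to $v \in V(G_k)$ (with $j \neq k$) precisely when $jk \in E(H)$, $u \in S_j$ and $v \in S_k$.

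The crucial step is to show that every eigenvector $u_i \in \mathcal{E}_{G_i}(\lambda)$ is orthogonal to $x_{S_i}$. I would obtain this from the characterization of $(k_i,\tau_i)$-regularity through the identity $A(G_i)\, x_{S_i} = (k_i - \tau_i) x_{S_i} + \tau_i \mathbf{1}$, which holds when $S_i$ is $(k_i,\tau_i)$-regular. Taking the inner product with $u_i$ and using $A(G_i) u_i = \lambda u_i$ together with the symmetry of $A(G_i)$ gives $(\lambda - (k_i-\tau_i))\, u_i^\top x_{S_i} = \tau_i\, u_i^\top \mathbf{1}$. Since $\lambda$ is non-main, $u_i^\top \mathbf{1} = 0$, and since $\lambda \neq k_i - \tau_i$ the coefficient of $u_i^\top x_{S_i}$ is nonzero, forcing $u_i^\top x_{S_i} = 0$. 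For $\tau_i > 0$ this is exactly the content of Lemma~\ref{non-main}. When instead it is the complement $V(G_i)\setminus S_i$ that is $(k_i,\tau_i)$-regular, I would run the same computation with $x_{V(G_i)\setminus S_i} = \mathbf{1} - x_{S_i}$ to get $u_i^\top(\mathbf{1}-x_{S_i})=0$, and then combine with $u_i^\top \mathbf{1}=0$ to again conclude $u_i^\top x_{S_i}=0$.

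With orthogonality in hand, I would set $u$ to be the vector whose $i$-th block equals $u_i$ and whose other blocks are zero. Reading off $A(G) u$ block by block, the $i$-th block gives $A(G_i) u_i = \lambda u_i$, while for $j \neq i$ the block yields $\delta_{j,i}(H)\, x_{S_j}\,(x_{S_i}^\top u_i) = 0$ because $x_{S_i}^\top u_i = 0$. Hence $A(G) u = \lambda u$ with $u \neq 0$, so $\lambda \in \sigma(G)$.

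The only real obstacle is the orthogonality step, and its delicacy lies entirely in covering all the hypotheses uniformly: the two possibilities (either $S_i$ or its complement is regular) and, in particular, the degenerate value $\tau_i = 0$ (for instance the $k_i$-regular case $S_i = V(G_i)$), which is excluded from Lemma~\ref{non-main}. Deriving the identity $(\lambda-(k_i-\tau_i))\,u_i^\top x_{S_i}=\tau_i\, u_i^\top\mathbf{1}$ directly from $A(G_i)\,x_{S_i}=(k_i-\tau_i)x_{S_i}+\tau_i\mathbf{1}$ sidesteps this, since it remains valid for $\tau_i=0$ and makes both the subset and the complement cases fall out of a single computation.
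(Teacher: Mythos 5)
Your proof is correct, and its skeleton is exactly the paper's: order $V(G)$ so that $A(G)$ becomes a block matrix with diagonal blocks $A(G_j)$ and off-diagonal blocks $\delta_{j,k}(H)\,x_{S_j}x_{S_k}^\top$, then pad an eigenvector $u_i$ of $A(G_i)$ with zero blocks and check that the padded vector is an eigenvector of $A(G)$. Where you diverge is in the justification of the key orthogonality $x_{S_i}^\top u_i=0$: the paper disposes of it with a one-line appeal to Lemma~\ref{non-main} (quoted from \cite{CSZ2008}), while you re-derive it from the identity $A(G_i)x_{S_i}=(k_i-\tau_i)x_{S_i}+\tau_i\mathbf{1}$, using the symmetry of $A(G_i)$, non-mainness of $\lambda$, and $\lambda\neq k_i-\tau_i$. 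This buys you two things the paper glosses over. First, Lemma~\ref{non-main} is stated only for $\tau>0$, whereas the theorem's hypothesis admits $\tau_i=0$ (for instance $S_i=V(G_i)$ with $G_i$ regular, which the paper's convention treats as $(k_i,0)$-regular, and which is precisely the situation of the corollary that follows); your computation remains valid at $\tau_i=0$, so it closes that small gap. Second, you make explicit the reduction for the case where only the complement $V(G_i)\setminus S_i$ is $(k_i,\tau_i)$-regular, via $x_{S_i}=\mathbf{1}-x_{V(G_i)\setminus S_i}$ combined with $u_i^\top\mathbf{1}=0$, whereas the paper leaves this case implicit in the same parenthetical citation. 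The trade-off is only length: the paper's argument is shorter by leaning on a known result, while yours is self-contained and covers all the hypothesis' cases uniformly.
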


\begin{proof}
Denoting $\delta_{i,j} = \delta_{i,j}(H)$, then $\delta_{ij} x_{S_i}x^T_{S_j}$, where $x_{S_i}$ and $x_{S_j}$ are the characteristic
vectors of $S_i$ and $S_j$, respectively, is an $n_i \times n_j$ matrix whose entries are zero if $ij \not \in E(H)$, otherwise
$$
\left(\delta_{i,j} x_{S_i}x^T_{S_j}\right)_{q,r} =  \left\{\begin{array}{ll}
                                                      1 & \hbox{ if } q \in S_i \wedge r \in S_j\\
                                                      0 & \hbox{ otherwise.}
                                                      \end{array}
                                                    \right.,
$$
Then the adjacency matrix of $G$ has the form
$$
A(G) = \left(\begin{array}{ccccc}
       A(G_1)                             & \delta_{1,2}x_{S_1} x^T_{S_2}      & \cdots & \delta_{1,p-1}x_{S_1} x^T_{S_{p-1}}&\delta_{1,p}x_{S_1} x^T_{S_p}\\
       \delta_{2,1}x_{S_2} x^T_{S_1}        & A(G_2)                           & \cdots & \delta_{2,p-1}x_{S_2} x^T_{S_{p-1}}&\delta_{2,p}x_{S_2} x^T_{S_p}\\
       \delta_{3,1}x_{S_3} x^T_{S_1}        &\delta_{3,2}x_{S_3} x^T_{S_2}     & \cdots & \delta_{3,p-1}x_{S_3} x^T_{S_{p-1}}&\delta_{3,p}x_{S_3} x^T_{S_p}\\
       \vdots                              &   \vdots                          & \ddots & \vdots                              & \vdots \\
       \delta_{p-1,1}x_{S_{p-1}}x^T_{S_1} &\delta_{p-1,2}x_{S_{p-1}} x^T_{S_2} & \cdots & A(G_{p-1})                         &\delta_{p-1,p}x_{S_{p-1}} x^T_{S_p}\\
       \delta_{p,1}x_{S_p}x^T_{S_1}         &\delta_{p,2}x_{S_p} x^T_{S_2}     & \cdots &\delta_{p-1,p}x_{S_{p-1}} x^T_{S_p} & A(G_p)\\
  \end{array}\right).
$$
Let $u_i$ be an eigenvector of $A(G_i)$ associated to the non-main eigenvalue $\lambda_i \ne k_i-\tau_i$, with $1 \le i \le p$. Then,
\begin{equation}\label{eigenvalue-equation}
A(G) \left(\begin{array}{c}
                 0 \\
                 \vdots \\
                 0 \\
                 u_i \\
                 0 \\
                 \vdots \\
                 0
        \end{array}\right) = \left(\begin{array}{c}
                                    \delta_{1,i}\left(x^T_{S_i}u_i\right)x_{S_1}  \\
                                    \vdots \\
                                    \delta_{i-1,i}\left(x^T_{S_i}u_i\right)x_{S_{i-1}} \\
                                    A(G_i)u_i \\
                                    \delta_{i+1,i}\left(x^T_{S_i}u_i\right)x_{S_{i+1}} \\
                                    \vdots \\
                                    \delta_{p,i}\left(x^T_{S_i}u_i\right)x_{S_p}
                             \end{array}\right) = \lambda_i \left(\begin{array}{c}
                                                                   0 \\
                                                                   \vdots \\
                                                                   0 \\
                                                                   u_i \\
                                                                   0 \\
                                                                   \vdots \\
                                                                   0
                                                                  \end{array}
                                                              \right),
\end{equation}
since $x_{S_i}$ is the characteristic vector of the vertex subset $S_i$ and $S_i$ or $V(G_i) \setminus S_i$ is  $(k_i,\tau_i)$-regular
(take into account that $\lambda_i$ is non-main and then we may apply Lemma~\ref{non-main}).
\end{proof}

From the proof of Theorem~\ref{nom-main-eigenvalues-join}, we may conclude the following corollary.

\begin{corollary}
Consider a graph $H$ of order $p$ and a family of $p$ graphs $\mathcal{F}=\{G_1, \ldots, G_p\}$ such that $|V(G_i)|=n_i, i=1, \ldots, p$.
Consider also the family of vertex subsets $\mathcal{S}=\{V(G_1), \ldots, V(G_p)\}$. Let $G=\bigvee_{(H,\mathcal{S})}\mathcal{F}$.
If $\lambda \in \sigma(G_i)$ for some $i \in \{1, \ldots, p\}$ is non-main, then $\lambda \in \sigma(G).$
\end{corollary}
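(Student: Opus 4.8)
The plan is to specialize the block computation carried out in the proof of Theorem~\ref{nom-main-eigenvalues-join}, observing that with $S_i = V(G_i)$ the single hypothesis driving that computation becomes automatic, so that both the restriction $\lambda \neq k_i - \tau_i$ and the regularity assumptions can be discarded.

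First I would note that taking $S_i = V(G_i)$ forces the characteristic vector $x_{S_i}$ to be the all one vector $\mathbf{1}_{n_i} \in \mathbb{R}^{n_i}$. The adjacency matrix $A(G)$ therefore keeps exactly the block form exhibited in the proof of Theorem~\ref{nom-main-eigenvalues-join}, with each $x_{S_i}$ replaced by $\mathbf{1}_{n_i}$, and $G$ is nothing but the ordinary $H$-join $H[G_1, \ldots, G_p]$ noted immediately after Definition~\ref{generalized_join_definition}.

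Next, given a non-main eigenvalue $\lambda \in \sigma(G_i)$ with associated eigenvector $u_i$ of $A(G_i)$, I would invoke the definition of non-mainness directly: the eigenspace $\mathcal{E}_{G_i}(\lambda)$ is orthogonal to the all one vector, whence $x_{S_i}^T u_i = \mathbf{1}_{n_i}^T u_i = 0$. This orthogonality is the only property of $u_i$ used in \eqref{eigenvalue-equation}. Embedding $u_i$ in the $i$-th block and zeros elsewhere, the diagonal block yields $A(G_i)u_i = \lambda u_i$ while every off-diagonal block in a row $j \neq i$ yields $\delta_{j,i}\big(x_{S_i}^T u_i\big)x_{S_j} = 0$; hence $A(G)$ sends this (nonzero) embedded vector to $\lambda$ times itself, and $\lambda \in \sigma(G)$.

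The step needing the most care is conceptual rather than computational: explaining why the excluded value $k_i - \tau_i$ disappears. In Theorem~\ref{nom-main-eigenvalues-join} the orthogonality $x_{S_i}^T u_i = 0$ was extracted from Lemma~\ref{non-main}, which presupposes that $S_i$ (or its complement) is $(k_i,\tau_i)$-regular with $\tau_i > 0$ and which forces one to set aside the case $\lambda = k_i - \tau_i$. When $S_i = V(G_i)$, however, the very same orthogonality is supplied for free by the definition of a non-main eigenvalue, with no regularity of $G_i$ required and no eigenvalue excluded. Thus there is no genuine obstacle here, only the observation that the specialization renders the regularity hypothesis of the theorem redundant.
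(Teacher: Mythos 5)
Your proof is correct and follows essentially the same route as the paper: both specialize the block identity \eqref{eigenvalue-equation} from the proof of Theorem~\ref{nom-main-eigenvalues-join} to the case $x_{S_i}=\mathbf{1}_{n_i}$, with the orthogonality $\mathbf{1}_{n_i}^T u_i=0$ now coming directly from the definition of a non-main eigenvalue rather than from Lemma~\ref{non-main}. Your explicit remark that this makes the regularity hypothesis and the exclusion of $k_i-\tau_i$ redundant is exactly the point the paper leaves implicit in its terse one-line argument.
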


\begin{proof}
Consider an eigenpair $(\lambda,u)$ of a graph $G_i$, for some $i \in \{1, \ldots, p\}$, where $\lambda$ is non-main. Then,
taking into account the equations \eqref{eigenvalue-equation} where, in this case, $x_{S_i}$ is the all one vector, the result
follows.
\end{proof}

Notice that in the above corollary, $G=\bigvee_{(H,\mathcal{S})}\mathcal{F}$ coincides with the $H$-join operation  of the family of graphs
$\mathcal{F}$ \cite{CFMR2011} (generalized composition $H[G_1, \ldots,G_p]$ in the terminology of \cite{Schwenk74}).

\begin{example}\label{ex2}
Consider the Example~\ref{ex1}, where $V(G_1)=\{a,b,c\}$ and $S_1=\{a,b\}$ is $(1,1)$-regular, $V(G_2)=\{d,e,f\}$ and $S_2=\{d,f\}$ is
$(0,2)$-regular, $V(G_3)=\{g,h,i,j\}$ and $S_3=\{g,i,j\}$ is $(2,2)$-regular.
\begin{itemize}
\item The eigenpairs of $A(G_1)$ are {\small $\left(\sqrt{2},\left[\begin{array}{c}
                                                    \sqrt{2}\\
                                                    1 \\
                                                    1 \\
                                              \end{array}
                                       \right]\right)$, $\left(0,\left[\begin{array}{c}
                                                                              0\\
                                                                              -1 \\
                                                                              1 \\
                                                                         \end{array}
                                                                         \right]\right)$} and {\small $\left(-\sqrt{2},\left[\begin{array}{c}
                                                                                                                           -\sqrt{2}\\
                                                                                                                           1 \\
                                                                                                                           1 \\
                                                                                                                    \end{array}
                                                                                                                    \right]\right)$}.
\item The eigenpairs of $A(G_2)$ are {\small $\left(\sqrt{2},\left[\begin{array}{c}
                                                    \sqrt{2}\\
                                                    2 \\
                                                    \sqrt{2} \\
                                              \end{array}
                                       \right]\right)$, $\left(0,\left[\begin{array}{c}
                                                                              -1\\
                                                                              0 \\
                                                                              1 \\
                                                                         \end{array}
                                                                         \right]\right)$} and {\small $\left(-\sqrt{2},\left[\begin{array}{c}
                                                                                                                           1\\
                                                                                                                           -\sqrt{2} \\
                                                                                                                           1 \\
                                                                                                                    \end{array}
                                                                                                                    \right]\right)$}.
\item The eigenpairs of $A(G_3)$ are {\small $\left(\frac{1+\sqrt{17}}{2},\left[\begin{array}{c}
                                                                     \frac{1+\sqrt{17}}{4}\\
                                                                     1 \\
                                                                     \frac{1+\sqrt{17}}{4}\\
                                                                     1
                                                                     \end{array}
                                                              \right]\right)$, $\left(0,\left[\begin{array}{c}
                                                                                                     -1 \\
                                                                                                      0\\
                                                                                                      1 \\
                                                                                                      0 \\
                                                                                             \end{array}
                                                                                             \right]\right)$,
                                                              $\left(\frac{1-\sqrt{17}}{2},\left[\begin{array}{c}
                                                                                                 \frac{1-\sqrt{17}}{4}\\
                                                                                                 1 \\
                                                                                                 \frac{1-\sqrt{17}}{4}\\
                                                                                                 1
                                                                                                 \end{array}
                                                                                                \right]\right)$} and
                                                              {\small $\left(-1,\left[\begin{array}{c}
                                                                                      0\\
                                                                                     -1 \\
                                                                                      0\\
                                                                                      1 \\
                                                                              \end{array}
                                                                              \right]\right)$}.
\end{itemize}
Let $G=\bigvee_{(H,\mathcal{S})}\mathcal{F}$, where $H=P_3$. Then, denoting $\delta_{i,j}=\delta_{i,j}(H)$ and defining the characteristic
vectors of the vertex subsets $S_1, S_2$ and $S_3$ considering their elements by alphabetic order, we obtain:
\begin{eqnarray*}
A(G) &=& \left(\begin{array}{ccc}
          A(G_1)                        & \delta_{1,2}x_{S_1} x^T_{S_2} & \delta_{1,3}x_{S_1} x^T_{S_3}\\
          \delta_{2,1}x_{S_2} x^T_{S_1} & A(G_2)                        & \delta_{2,3}x_{S_2} x^T_{S_3}\\
          \delta_{3,1}x_{S_3} x^T_{S_1} & \delta_{3,2}x_{S_3} x^T_{S_2} & A(G_3)\\
         \end{array}\right)\\
     &=& \left(\begin{array}{ccc}
          A(G_1)                 & \left[\begin{array}{c}
                                                1 \\
                                                1\\
                                                0\\
                                          \end{array}
                                   \right][1\; 0 \; 1]              & \textbf{0}_{3 \times 4}\\
         \left[\begin{array}{c}
                      1 \\
                      0\\
                      1\\
               \end{array}
         \right][1\; 1 \; 0]     & A(G_2)                           & \left[\begin{array}{c}
                                                                                   1 \\
                                                                                   0\\
                                                                                   1\\
                                                                            \end{array}
                                                                      \right][1\; 0 \; 1 \; 1]\\
         \textbf{0}_{4 \times 3} & \left[\begin{array}{c}
                                                1 \\
                                                0 \\
                                                1 \\
                                                1 \\
                                         \end{array}
                                   \right][1\; 0 \; 1]              & A(G_3)\\
        \end{array}\right).
\end{eqnarray*}
According to Theorem~\ref{nom-main-eigenvalues-join}, $\{0, -1\} \subset \sigma(A(G))$. Notice that $S_1 \subseteq V(G_1)$ is $(1,1)$-regular
and thus we are not able to get a conclusion about if the eigenvalue $0$ of $A(G_1)$ is or not an eigenvalue of $A(G)$. On the other hand
$S_2 \subseteq V(G_2)$ is $(0,2)$-regular and $S_3 \subseteq V(G_3)$ is $(2,2)$-regular. In fact,
$$
\sigma(A(G)) = \{4.44999, 1.86239, 0, 0, 0, -1, -1.3822, -1.51442, -3.02546\}.
$$
\end{example}

Consider a graph $H$ of order $p$, a family of graphs $\mathcal{F}=\{G_1, \ldots, G_p\}$, where each graph $G_i$ has  order $n_i$, and a family
of vertex subsets $\mathcal{S}=\{S_1, \ldots, S_p\}$, where for each $i \in \{1, \ldots, p\}$, $S_i \subseteq V(G_i)$. If
$G=\bigvee_{(H,\mathcal{S})}\mathcal{F}$ and $(\lambda,\widehat{u})$ is an eigenpair of $A(G)$, decomposing $\widehat{u}$ such that
$\widehat{u}=\left(\begin{array}{c}
                          u_1 \\
                          u_2\\
                          \vdots \\
                          u_p\\
                          \end{array}
                   \right)$, where each $u_i$ is a subvector of $\widehat{u}$ with $n_i$ components, then $\lambda \widehat{u}=A(G)\widehat{u}$,
that is,
\begin{eqnarray}
\lambda \left(\begin{array}{c}
                           u_1\\
                           u_2\\
                           \vdots\\
                           u_p
                           \end{array}
                    \right) &=& \left(\begin{array}{c}
       A(G_1)u_1 + \left(\sum_{j \ne 1}{\delta_{1,j} x^T_{S_j}u_j}\right)x_{S_1} \\
       A(G_2)u_2 + \left(\sum_{j \ne 2}{\delta_{2,j} x^T_{S_j}u_j}\right)x_{S_2}\\
       \vdots \\
       A(G_p)u_p + \left(\sum_{j \ne p}{\delta_{p,j} x^T_{S_j}u_j}\right)x_{S_p}\\
       \end{array}
\right),\label{marca-z}
\end{eqnarray}
where $\delta_{i,j}=\delta_{i,j}(H)$.

Furthermore, if we assume that $G_i$ is $d_i$-regular and $S_i$ or its complement is $(k_i,\tau_i)$-regular, for $i=1, \ldots, p$, respectively,
according to Theorem~\ref{nom-main-eigenvalues-join},
$$
\bigcup_{i=1}^{p}{\left(\sigma(G_i) \setminus \{d_i,k_i-\tau_i\}\right)} \subset \sigma(G),
$$
since by one hand, as it is well known, all the eigenvalues of each graph $G_i$ are non-main but $d_i$, on the other hand, if a regular graph
has a $(k,\tau)$-regular vertex subset, then $k-\tau$ is a non-main eigenvalue \cite{CSZ2008}.\\

Additionally, assuming that $S_i=V(G_i)$, for $i=1, \ldots, p$, the remaining eigenvalues of $G$ can be computed as follows: let us define
$\widehat{u}$, setting each of its subvectors $u_i=\theta_ie_{n_i}$, for $i=1, \ldots, p$, where each $e_{n_i}$ is an all one vector with $n_i$
componentes and $\theta_1, \ldots, \theta_p$ are scalars. Then the system \eqref{marca-z} becomes
\begin{eqnarray*}
\lambda \left(\begin{array}{c}
                     \theta_1 e_{n_1} \\
                     \theta_2 e_{n_2}\\
                     \vdots \\
                     \theta_p e_{n_p}\\
         \end{array}\right) &=& \left(\begin{array}{c}
                                      \left(d_1\theta_1 + \sum_{j \ne 1}{\delta_{1,j} \theta_j n_j}\right)e_{n_1} \\
                                      \left(d_2\theta_2 + \sum_{j \ne 2}{\delta_{2,j} \theta_j n_j}\right)e_{n_2}\\
                                      \vdots \\
                                      \left(d_p\theta_p + \sum_{j \ne p}{\delta_{p,j} \theta_j n_j}\right)e_{n_p}\\
                                      \end{array}
                                \right).
\end{eqnarray*}
Therefore, $(\lambda,\widehat{u})$ is an eigenpair for $A(G)$ if and only if $(\lambda,\widehat{\theta})$,
where $\widehat{\theta} = (\theta_1, \theta_2, \ldots, \theta_p)^T$, is an eigenpair of the matrix
\begin{equation}\label{matrix_m}
M = \left(\begin{array}{cccc}
           d_1             & \delta_{1,2}n_2 & \ldots & \delta_{1,p}n_p \\
           \delta_{2,1}n_1 & d_2             & \ldots & \delta_{2,p}n_p \\
           \vdots          & \vdots          & \ddots & \vdots \\
           \delta_{p,1}n_1 & \delta_{p,2}n_2 & \ldots & d_p \\
           \end{array}
    \right),
\end{equation}
that is, $M\widehat{\theta}=\lambda\widehat{\theta}$.\\
Setting $D=\text{diag}(d_1, \ldots, d_p)$ and $N=\text{diag}(n_1, \ldots, n_p)$, then  $M=A(H)N + D$
is similar to the symmetric matrix
\begin{equation}\label{matrix_m2}
M' = \left(\begin{array}{cccc}
           d_1                       & \delta_{1,2}\sqrt{n_1n_2} & \ldots & \delta_{1,p}\sqrt{n_1n_p}\\
           \delta_{2,1}\sqrt{n_1n_2} & d_2                       & \ldots & \delta_{2,p}\sqrt{n_2n_p}\\
           \vdots          & \vdots                              & \ddots & \vdots \\
           \delta_{p,1}\sqrt{n_1n_p} & \delta_{p,2}\sqrt{n_2n_p} & \ldots & d_p \\
           \end{array}
    \right),
\end{equation}
since $M'=KMK^{-1}$ with $K=\text{diag}(\sqrt{n_1}, \ldots, \sqrt{n_p})$. Therefore, $M'=D+KA(H)K$ and $\sigma(M)=\sigma(M')$.

Based on the above analysis, we are able to deduce the following result.

\begin{theorem}\label{teorema2}
Consider a graph $H$ of order $p$ and a family of regular graphs $\mathcal{F}=\{G_1, \ldots, G_p\}$, where each regular graph $G_i$
has degree $d_i$ and order $n_i$. Consider the family of vertex subsets $\mathcal{S}=\{S_1, \ldots, S_p\}$, where
$$
S_i \in \{S'_i \subseteq V(G_i): S'_i \text{ or } V(G_i) \setminus S'_i \text{ is } (k_i,\tau_i)-\text{regular, for some } k_i, \tau_i\},
$$
for $i=1, \ldots, p$. Assume that $G=\bigvee_{(H,\mathcal{S})}\mathcal{F}$ and $M'$ is the matrix defined in \eqref{matrix_m2}.
If $S_i=V(G_i)$, for $i=1, \ldots, p$, then
$$
\sigma(G) = \left(\bigcup_{i=1}^{p}{\sigma(G_i)\setminus \{d_i\}}\right) \cup \sigma\left(M'\right),
$$
otherwise $\sigma(G) \supseteq \bigcup_{i=1}^{p}{\sigma(G_i)\setminus \{d_i, k_i-\tau_i\}}$.
\end{theorem}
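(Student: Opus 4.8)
The plan is to dispatch the two cases separately. The \emph{otherwise} inclusion $\sigma(G)\supseteq\bigcup_{i=1}^{p}\sigma(G_i)\setminus\{d_i,k_i-\tau_i\}$ is nothing more than the containment already recorded in the paragraph preceding the statement: it follows from Theorem~\ref{nom-main-eigenvalues-join} applied to each $G_i$, together with the two standard facts that every eigenvalue of a $d_i$-regular graph other than $d_i$ is non-main, and that $k_i-\tau_i$ is a non-main eigenvalue whenever a regular graph carries a $(k_i,\tau_i)$-regular set. The real work lies in the case $S_i=V(G_i)$ for all $i$, where this containment must be promoted to an \emph{equality} of multisets; there $x_{S_i}=e_{n_i}$ and $G$ is the ordinary $H$-join.

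For that case I would produce a complete eigenbasis of $A(G)$ by splitting $\mathbb{R}^{n}$, with $n=\sum_{i=1}^{p}n_i$, into two $A(G)$-invariant orthogonal pieces:
$$
\mathcal{W}=\{\widehat{v}=(v_1,\ldots,v_p)^{T}: v_i\perp e_{n_i}\ \forall i\},\qquad \mathcal{U}=\{\widehat{v}=(v_1,\ldots,v_p)^{T}: v_i\in\mathrm{span}(e_{n_i})\ \forall i\},
$$
so that $\mathbb{R}^{n}=\mathcal{W}\oplus\mathcal{U}$ with $\dim\mathcal{W}=n-p$ and $\dim\mathcal{U}=p$. Reading off block $i$ of $A(G)\widehat{v}$ from \eqref{marca-z} and using $x_{S_j}=e_{n_j}$, one checks invariance: if $\widehat{v}\in\mathcal{W}$ then every inner product $x_{S_j}^{T}v_j=e_{n_j}^{T}v_j$ vanishes, so block $i$ reduces to $A(G_i)v_i$, which again lies in $e_{n_i}^{\bot}$ because $A(G_i)$ is symmetric with $e_{n_i}$ as an eigenvector; hence $A(G)\widehat{v}\in\mathcal{W}$. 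On $\mathcal{U}$ the invariance, and the precise action, are exactly the reduction already carried out in the text: writing $v_i=\theta_i e_{n_i}$ turns the eigenvalue equation into $M\widehat{\theta}=\lambda\widehat{\theta}$ with $M=A(H)N+D\sim M'$.

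It then remains to read off the two restricted spectra. On $\mathcal{W}$ the operator $A(G)$ acts as the direct sum $\bigoplus_{i=1}^{p}A(G_i)\big|_{e_{n_i}^{\bot}}$, whose combined spectrum is $\bigcup_{i=1}^{p}\sigma(G_i)\setminus\{d_i\}$; here one must be slightly careful with multiplicities, since deleting the line $e_{n_i}$ removes exactly one copy of $d_i$ (the main one), while any further copies of $d_i$ arising from a disconnected $G_i$ survive as non-main eigenvalues, consistent with the multiset difference. On $\mathcal{U}$ the operator acts as $M$, similar to the symmetric $M'$ of \eqref{matrix_m2}, so its spectrum is $\sigma(M')$. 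Since $\mathbb{R}^{n}=\mathcal{W}\oplus\mathcal{U}$, the spectrum of $A(G)$ is the multiset union of these two, giving the claimed equality. The main obstacle is precisely this completeness step: the containment direction is routine, whereas the equality requires knowing that $A(G)$ genuinely block-diagonalizes across $\mathcal{W}\oplus\mathcal{U}$, which hinges on $x_{S_j}=e_{n_j}$ being orthogonal to the $\mathcal{W}$-blocks, so that no eigenvalues are missed and the dimension count $n=(n-p)+p$ closes exactly.
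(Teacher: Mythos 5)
Your proof is correct, and for the ``otherwise'' inclusion it coincides with the paper's argument: both invoke Theorem~\ref{nom-main-eigenvalues-join} together with the two standard facts that every eigenvalue of a $d_i$-regular graph other than $d_i$ is non-main, and that $k_i-\tau_i$ is non-main whenever a regular graph has a $(k_i,\tau_i)$-regular set. Where you genuinely diverge is in the equality case. The paper's proof there is a one-line appeal to the preceding analysis: it exhibits exactly the same two families of eigenvectors that you do (block-supported non-main eigenvectors coming from Theorem~\ref{nom-main-eigenvalues-join}, and block-constant vectors which, via \eqref{marca-z}, reduce to eigenvectors of $M=A(H)N+D$, similar to $M'$ of \eqref{matrix_m2}), and observes that when $S_i=V(G_i)$ one has $k_i=d_i$, $\tau_i=0$, so the excluded value $k_i-\tau_i$ coincides with $d_i$; but it never makes explicit why these eigenvectors exhaust the whole spectrum, i.e.\ why the containment is an \emph{equality} of multisets. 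Your orthogonal decomposition $\mathbb{R}^n=\mathcal{W}\oplus\mathcal{U}$ into $A(G)$-invariant subspaces supplies precisely that missing completeness step: $A(G)$ restricted to $\mathcal{W}$ is $\bigoplus_{i=1}^{p}A(G_i)\big|_{e_{n_i}^{\bot}}$, whose spectrum is $\bigcup_{i=1}^{p}\sigma(G_i)\setminus\{d_i\}$ (with the multiplicity caveat for disconnected $G_i$ that you correctly flag), $A(G)$ restricted to $\mathcal{U}$ is conjugate to $M$, and the dimension count $(n-p)+p=n$ closes the argument. So your route buys rigor: it converts the paper's implicit count of eigenvectors, whose linear independence is never checked there, into a clean block-diagonalization, at the modest cost of verifying the two invariance statements, both of which you establish correctly using the regularity of each $G_i$.
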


\begin{proof}
The conclusions are direct consequence of the above analysis, taking into account that if $S_i=V(G_i)$, for $i =1, \ldots, p$,
then each $S_i$ is $(k_i,\tau_i)$-regular, with $k_i=d_i$ and $\tau_i=0$.
\end{proof}

\section{Some applications on the spread of graphs}

\subsection{Definitions and basic results}

Given a $n \times n$ complex matrix $M$, the spread of $M$, $s(M)$, is defined as $\max_{i,j}|\lambda_i(M)-\lambda_j(M)|$, where the maximum is
taken over all pairs of eigenvalues of $M$. Then
$$
s(M) = \max_{x,y}\left(x^*Mx - y^*My\right) = \max \sum_{i,j}{m_{i,j}\left(\bar{x}_ix_j - \bar{y}_iy_j\right)},
$$
where $z^*$ is the conjugate transpose of $z$ and the maximum is taken over all pairs of unit vectors in $\mathbb{C}^n$.

\begin{theorem}\label{Th-mirsky}\cite{M1956}
$s(M) \le \left(2\sum_{i,j}{|m_{i,j}|^2 - \frac{2}{n}|\sum_{i}{m_{i,i}}|^2}\right)^{1/2}$, with equality if and
only if $M$ is a normal matrix (that is, such that $M^*M=MM^*$), with $n - 2$ of its eigenvalues all equal to the
average of the remaining two.
\end{theorem}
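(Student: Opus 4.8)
The plan is to reduce the statement to the case of a trace-zero matrix and then combine Schur's inequality with an elementary two-point estimate. First I would introduce the centered matrix $B = M - \frac{t}{n}I$, where $t = \sum_i m_{i,i}$ is the trace. Since $B$ differs from $M$ by a scalar multiple of the identity, every eigenvalue is shifted by the same constant, so the pairwise differences $\lambda_i(M)-\lambda_j(M)$ are unchanged and hence $s(B)=s(M)$. A short computation with $\mathrm{tr}(B^{*}B)$ gives $\sum_{i,j}|b_{i,j}|^2 = \sum_{i,j}|m_{i,j}|^2 - \frac{1}{n}|t|^2$, so that twice this quantity is exactly the square of the claimed bound; this is precisely where the improving term $-\frac{2}{n}|\sum_i m_{i,i}|^2$ comes from, and it is the reason the centering step is essential rather than cosmetic.

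Next I would denote by $\mu_1,\dots,\mu_n$ the eigenvalues of $B$ (the shifted eigenvalues of $M$) and pick indices $p,q$ realizing the spread, so $s(M)^2 = |\mu_p-\mu_q|^2$. The chain of inequalities I would use is
$$
|\mu_p-\mu_q|^2 \;\le\; 2|\mu_p|^2 + 2|\mu_q|^2 \;\le\; 2\sum_{i=1}^{n}|\mu_i|^2 \;\le\; 2\sum_{i,j}|b_{i,j}|^2,
$$
the first step being the identity $2|\mu_p|^2+2|\mu_q|^2-|\mu_p-\mu_q|^2 = |\mu_p+\mu_q|^2 \ge 0$, the second step dropping the nonnegative terms $|\mu_i|^2$ for $i\notin\{p,q\}$, and the third step being Schur's inequality $\sum_i|\mu_i|^2 \le \sum_{i,j}|b_{i,j}|^2$ (which follows from the unitary triangularization $B=UTU^{*}$ and the fact that $\|B\|_F=\|T\|_F$). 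Substituting the Frobenius identity from the first paragraph and taking square roots yields the stated bound.

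For the equality characterization I would examine the three inequalities above one at a time. Equality in the first forces $\mu_p+\mu_q=0$; equality in the second forces $\mu_i=0$ for all $i\notin\{p,q\}$; and equality in Schur's inequality forces $T$ to be diagonal, i.e.\ $M$ normal. Translating back through $\mu_i=\lambda_i(M)-t/n$, the middle condition says that $n-2$ eigenvalues of $M$ equal $t/n$, while $\mu_p+\mu_q=0$ says $t/n=\tfrac12(\lambda_p+\lambda_q)$; together these say exactly that $n-2$ eigenvalues coincide with the average of the remaining two. I would then verify the converse: if $M$ is normal and $n-2$ of its eigenvalues equal $\tfrac12(\lambda_p+\lambda_q)$, a direct computation of the trace confirms $t/n = \tfrac12(\lambda_p+\lambda_q)$, so all three equalities hold and the bound is attained.

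The routine parts are the two Frobenius computations and the elementary identity; the one external ingredient I would cite is Schur's inequality together with its equality case. The step requiring the most care is the equality analysis, specifically checking that the three separate equality conditions are mutually consistent (that the centering constant $t/n$ is forced to be the midpoint of $\lambda_p$ and $\lambda_q$) and that, in the converse direction, the distinguished pair $\lambda_p,\lambda_q$ genuinely realizes the spread---which holds because the remaining eigenvalues all sit at the midpoint of the segment joining $\lambda_p$ to $\lambda_q$ and are therefore at distance $\tfrac12|\lambda_p-\lambda_q|$ from each.
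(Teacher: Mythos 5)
Your proposal is correct, but note that the paper contains no proof of this statement to compare against: Theorem~\ref{Th-mirsky} is quoted verbatim from Mirsky's 1956 paper \cite{M1956} and used as a black box. Your argument---centering $M$ by $\frac{t}{n}I$ to produce the trace term, the parallelogram identity $|\mu_p-\mu_q|^2 + |\mu_p+\mu_q|^2 = 2|\mu_p|^2+2|\mu_q|^2$, Schur's inequality with its equality case, and the midpoint check in the converse direction (including the verification that $\lambda_p,\lambda_q$ really attain the spread)---is sound at every step and is essentially the classical proof, so it fills the gap the paper leaves to the reference.
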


Several results on the spread of normal and Hermitian matrices were presented in \cite{JKW1985, NT1994}.

In this paper, we consider only the spread of adjacency matrices of simple graphs and we define the spread of a graph $G$ as the spread
$s(A(G))$, which is simply denoted by $s(G)$. Therefore,
$$
s(G) = \max_{i,j}\{|\lambda_i(G)-\lambda_j(G)|\},
$$
where the maximum is taken over all pairs of eigenvalues of the adjacency matrix of $G$. If the graph $G$ has order $n$, then
$s(G) =\lambda_1(G) - \lambda_n(G)$ and replacing the matrix $M$ of Theorem~\ref{Th-mirsky} by $A(G)$, it follows that
\begin{equation}
s(G) = \lambda_1(G) - \lambda_n(G) \le \sqrt{4|E(G)|}. \label{mirsky-inequalities1}
\end{equation}
Denoting the average degree of the vertices of $G$ by $\overline{d}(G)$, from \eqref{mirsky-inequalities1} it follows that
\begin{equation}
s(G) \le \sqrt{2n\overline{d}(G)} < \sqrt{2n(n-1)} \label{mirsky-inequalities2}
\end{equation}
if $n>2$, since $\overline{d}(G) \le n-1$ and $\overline{d}(G) = n-1$ if and only if $G=K_n$. Notice that  $\sigma(K_n)=\{n-1,(-1)^{[n-1]}\}$
and then $s(K_n)=n$. Furthermore,
\begin{equation}
\overline{d}(G) \le \frac{n}{2} \Rightarrow s(G) \le n. \label{mirsky-inequalities3}
\end{equation}

In \cite{GHK2001} the following upper bounds on the spread of a graph were obtained.

\begin{theorem}\cite{GHK2001}
If $G$ is a graph of order $n$, then
\begin{equation}
s(G) \le \lambda_1(G) + \sqrt{2|E(G)|-\lambda^2_1(G)} \le 2\sqrt{|E(G)|}. \label{gregory-inequalities}
\end{equation}
Equality holds throughout if and only if equality holds in the first inequality; equivalently, if and only if $|E(G)|=0$
or $G=K_{p,q}$, for some $p$ and $q$.
\end{theorem}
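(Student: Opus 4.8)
The plan is to work directly with the first two spectral moments of $A(G)$. Write $m=|E(G)|$ and order the eigenvalues as $\lambda_1\ge\cdots\ge\lambda_n$. The two facts I would lean on are that $A(G)$ has zero trace, so $\sum_i\lambda_i=0$, and that $\mathrm{tr}(A(G)^2)=2m$, so $\sum_i\lambda_i^2=2m$. If $m=0$ every eigenvalue vanishes and all three quantities in the statement equal $0$, so from here on I assume $m>0$; then $G$ has an edge and, as recalled in the introduction, $\lambda_1>0>\lambda_n$, so that $-\lambda_n=|\lambda_n|\ge 0$.

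For the first inequality I would isolate the two extreme eigenvalues in the second-moment identity: since $\lambda_1^2+\lambda_n^2\le\sum_i\lambda_i^2=2m$, we get $\lambda_n^2\le 2m-\lambda_1^2$, hence $-\lambda_n\le\sqrt{2m-\lambda_1^2}$, and adding $\lambda_1$ gives $s(G)=\lambda_1-\lambda_n\le\lambda_1+\sqrt{2m-\lambda_1^2}$. For the second inequality I would apply the elementary estimate $a+b\le\sqrt{2(a^2+b^2)}$ (equivalently $(a-b)^2\ge 0$, or Cauchy--Schwarz) to $a=\lambda_1$ and $b=\sqrt{2m-\lambda_1^2}$; here $a^2+b^2=2m$, so $a+b\le\sqrt{4m}=2\sqrt{m}$, which is the claimed bound.

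The real content is the equality discussion, and the cleanest route is to show that equality in the \emph{first} inequality already forces the whole chain to be tight. Indeed, equality there means $-\lambda_n=\sqrt{2m-\lambda_1^2}$, i.e.\ $\lambda_1^2+\lambda_n^2=2m=\sum_i\lambda_i^2$, which forces $\lambda_i=0$ for every $i\ne 1,n$. The trace condition then gives $\lambda_n=-\lambda_1$, and substituting back yields $2\lambda_1^2=2m$, so $\lambda_1=\sqrt{m}=\sqrt{2m-\lambda_1^2}$ --- precisely the equality case of the second inequality. Hence equality holds throughout if and only if it holds in the first inequality, and this happens exactly when the spectrum is $\{\sqrt{m},\,0^{[n-2]},\,-\sqrt{m}\}$; that is, when $A(G)$ has rank $2$ with a symmetric pair of nonzero eigenvalues.

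It then remains to identify the graphs with this spectrum, and I expect this to be the main obstacle. The easy direction is that $K_{p,q}$ realizes it: its adjacency matrix has nonzero eigenvalues $\pm\sqrt{pq}$ and all other eigenvalues $0$, with $pq=m$, so equality does hold for complete bipartite graphs (and trivially in the excluded case $m=0$). For the converse I would invoke the classical spectral characterization that a graph with exactly one positive eigenvalue is complete multipartite on its non-isolated vertices, together with the fact that a complete multipartite graph $K_{n_1,\dots,n_r}$ has exactly $r$ nonzero eigenvalues (its adjacency matrix has rank $r$). Since our spectrum contains exactly two nonzero eigenvalues, this forces $r=2$, so $G$ is $K_{p,q}$ up to isolated vertices. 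Proving or carefully citing this multipartite characterization, and doing the bookkeeping of isolated vertices so as to land exactly on the dichotomy ``$m=0$ or $G=K_{p,q}$'', is the step that needs the most care.
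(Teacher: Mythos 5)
The paper itself contains no proof of this statement: it is quoted verbatim from \cite{GHK2001}, so there is no internal argument to compare yours against, and the proposal has to be judged on its own merits. On those merits it is correct, and it is the standard moment argument: both inequalities follow from $\operatorname{tr}(A(G))=0$ and $\operatorname{tr}(A(G)^2)=2m$ exactly as you write them, and your key observation --- that equality in the first inequality forces $\lambda_1^2+\lambda_n^2=2m$, hence all other eigenvalues vanish, hence $\lambda_n=-\lambda_1=-\sqrt{m}$, which in turn forces equality in the second inequality --- is precisely the right way to collapse the whole equality discussion to the single spectral condition $\sigma(G)=\{\sqrt{m},0^{[n-2]},-\sqrt{m}\}$.

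Two remarks on the final identification step, which you rightly flag as the delicate part. First, invoking Smith's theorem on graphs with one positive eigenvalue works but is heavier than needed: a symmetric $0$--$1$ matrix with zero diagonal and rank $2$ can be classified directly. Pick an edge $uv$; the rows $r_u,r_v$ are independent (the $2\times 2$ principal submatrix on $\{u,v\}$ is $\bigl(\begin{smallmatrix}0&1\\1&0\end{smallmatrix}\bigr)$), so any other row is $\alpha r_u+\beta r_v$ with $\alpha,\beta\in\{0,1\}$ and $\alpha\beta=0$ (from the zero diagonal), which says every non-isolated vertex duplicates $u$ or duplicates $v$; this gives complete bipartite plus isolated vertices with no appeal to the multipartite classification. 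Second, your caution about isolated vertices is not mere bookkeeping: the equality class genuinely is ``$K_{p,q}$ together with isolated vertices,'' and the dichotomy as transcribed in the paper (``$|E(G)|=0$ or $G=K_{p,q}$'') is literally too narrow. For instance, a single edge plus one isolated vertex has $m=1$ and spectrum $\{1,0,-1\}$, so $s(G)=2=\lambda_1+\sqrt{2m-\lambda_1^2}=2\sqrt{m}$ with equality throughout, yet this graph is neither edgeless nor equal to any $K_{p,q}$. So you should not try to land exactly on the paper's wording; state the equality class with the isolated vertices included, which is how the source result should be read.
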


\begin{theorem}\label{kresult}\cite{GHK2001}
If $G$ is a regular graph of order $n$, then $s(G) \le n$. Equality holds if and only if the complement of $G$, $\overline{G}$,
is disconnected.
\end{theorem}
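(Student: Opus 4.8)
The plan is to pass to the complement and exploit the fact that, for a regular graph, the spectra of $G$ and $\overline{G}$ are rigidly linked. Write $A=A(G)$ and let $G$ be $d$-regular of order $n$, so that the all-one vector $\mathbf 1$ is an eigenvector of $A$ with eigenvalue $\lambda_1(G)=d$, and $s(G)=\lambda_1(G)-\lambda_n(G)=d-\lambda_n(G)$. The basic identity is $A(G)+A(\overline{G})=J-I$, where $J$ is the all-one $n\times n$ matrix. Since $G$ is $d$-regular, $\overline{G}$ is $(n-1-d)$-regular, and both adjacency matrices leave the line $\langle\mathbf 1\rangle$ and its orthogonal complement $\mathbf 1^{\perp}$ invariant. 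On $\mathbf 1^{\perp}$ we have $J=0$, hence $A(\overline{G})|_{\mathbf 1^{\perp}}=-I-A|_{\mathbf 1^{\perp}}$; consequently every eigenvalue $\mu$ of $A$ afforded by an eigenvector orthogonal to $\mathbf 1$ produces the eigenvalue $-1-\mu$ of $\overline{G}$, while $\mathbf 1$ itself yields $n-1-d$. Thus $\sigma(\overline{G})=\{n-1-d\}\cup\{-1-\mu:\mu\in\sigma(A|_{\mathbf 1^{\perp}})\}$.

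From this I would read off the inequality. Because $\overline{G}$ is $(n-1-d)$-regular, its largest eigenvalue equals its degree $n-1-d$. If $G$ has at least one edge, the introductory remark of the paper gives $\lambda_n(G)\le -1 < d$, so the eigenspace of $\lambda_n(G)$ is orthogonal to that of $d$ and in particular to $\mathbf 1$; hence $-1-\lambda_n(G)$ is a genuine eigenvalue of $\overline{G}$ and cannot exceed its top eigenvalue, giving $-1-\lambda_n(G)\le n-1-d$. Rearranging yields $d-\lambda_n(G)\le n$, that is $s(G)\le n$. The edgeless case $G=\overline{K}_n$ is trivial, since then $s(G)=0$ and $\overline{G}=K_n$ is connected.

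For equality, note that $s(G)=n$ is equivalent to $\lambda_n(G)=d-n$, hence to $-1-\lambda_n(G)=n-1-d$; that is, the value $n-1-d$ occurs in $\sigma(\overline{G})$ both as the $\mathbf 1$-eigenvalue and as $-1-\lambda_n(G)$, so the largest eigenvalue $n-1-d$ of $\overline{G}$ has multiplicity at least two. The key fact to invoke is that for a regular graph the multiplicity of the degree (equivalently, largest) eigenvalue equals the number of connected components, a consequence of Perron--Frobenius applied to each component. Therefore multiplicity at least two is equivalent to $\overline{G}$ being disconnected, which completes the characterization. I expect the main obstacle to be exactly this final equivalence: one must confirm that the extra copy of $n-1-d$ genuinely arises from the $\lambda_n(G)$-eigenvector and is not produced spuriously when $G$ is itself disconnected (where $d$ has multiplicity greater than one). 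This is settled by checking that the surviving copies of $d$ in $\sigma(A|_{\mathbf 1^{\perp}})$ contribute the eigenvalue $-1-d\neq n-1-d$, so that the multiplicity of the top eigenvalue of $\overline{G}$ is precisely what governs equality.
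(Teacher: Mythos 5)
Your proposal is correct, but note first that the paper contains no proof of this statement to compare against: the theorem is quoted from \cite{GHK2001} and stated without argument, so your proof has to stand on its own --- and it does. The complement identity $A(G)+A(\overline{G})=J-I$, the invariant splitting $\mathbb{R}^n=\langle\mathbf{1}\rangle\oplus\mathbf{1}^{\perp}$ (valid because both $G$ and $\overline{G}$ are regular), and the resulting description $\sigma(\overline{G})=\{n-1-d\}\cup\{-1-\mu:\mu\in\sigma(A|_{\mathbf{1}^{\perp}})\}$ yield $-1-\lambda_n(G)\le n-1-d$, hence $s(G)=d-\lambda_n(G)\le n$, exactly as you argue; the degenerate edgeless case is correctly set aside. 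Your equality analysis is also sound: $s(G)=n$ holds precisely when $d-n$ occurs in $\sigma(A|_{\mathbf{1}^{\perp}})$, which happens precisely when the Perron eigenvalue $n-1-d$ of the $(n-1-d)$-regular graph $\overline{G}$ has multiplicity at least two, which by Perron--Frobenius applied componentwise happens precisely when $\overline{G}$ is disconnected. The two pitfalls you flag are genuine, and you close both: eigenvectors for $\lambda_n(G)\ne d$ are automatically orthogonal to $\mathbf{1}$, and copies of $d$ living in $\mathbf{1}^{\perp}$ (present when $G$ itself is disconnected) contribute $-1-d\ne n-1-d$ to $\sigma(\overline{G})$, so they cannot fake the second copy of the top eigenvalue. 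This is the standard route to the result and, in all likelihood, essentially the argument of the cited source, so nothing further is needed.
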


Additional results on the spread of graphs can be found in \cite{GHK2001, LiuMu2009}.

\subsection{The spread of the join of two graphs}

Now it is worth to recall the join of two vertex disjoint graphs $G_1$ and $G_2$ which is the graph $G_1 \vee G_2$ obtained from
their union connecting each vertex of $G_1$ to each vertex of $G_2$.  Considering this graph operation, as direct consequence of
Theorem~\ref{teorema2}, we have the following corollaries. Notice that Corollary~\ref{cor1} is well known (see, for instance,
\cite{Schwenk74}).

\begin{corollary}\label{cor1}
If $G_i$ is a $d_i$-regular graph of order $n_i$, for $i=1, 2$, then
$$
\sigma(G_1 \vee G_2) = \bigcup_{i=1}^{2}\left(\sigma(A(G_i)) \setminus \{d_i\}\right) \cup \{\beta_1,\beta_2\}.
$$
where $\beta_1$ and $\beta_2$ are eigenvalues of the matrix $M' = \left(\begin{array}{cc}
                                                                        d_1           & \sqrt{n_1n_2} \\
                                                                        \sqrt{n_1n_2} & d_2 \\
                                                                        \end{array}
                                                                     \right)$, that is,
\begin{eqnarray}
\beta_1 &=& \frac{d_1 + d_2 + \sqrt{(d_1-d_2)^2+4n_1n_2}}{2} \label{beta1}\\
\beta_2 &=& \frac{d_1 + d_2 - \sqrt{(d_1-d_2)^2+4n_1n_2}}{2} \label{beta2}.
\end{eqnarray}
\end{corollary}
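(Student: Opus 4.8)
The plan is to recognize the join $G_1 \vee G_2$ as a special instance of the constrained generalized join operation and then apply Theorem~\ref{teorema2} verbatim. First I would set $H = K_2$ (the single edge on vertices $1$ and $2$, so $\delta_{1,2}(H) = \delta_{2,1}(H) = 1$), take the family $\mathcal{F} = \{G_1, G_2\}$, and choose the vertex subsets $S_i = V(G_i)$ for $i = 1, 2$. Unwinding Definition~\ref{generalized_join_definition}, the edges of $\bigvee_{(H,\mathcal{S})}\mathcal{F}$ are $E(G_1) \cup E(G_2)$ together with all pairs $xy$ where $x \in S_1 = V(G_1)$ and $y \in S_2 = V(G_2)$ (since $12 \in E(H)$); this is exactly the edge set of $G_1 \vee G_2$, so the constrained join collapses to the ordinary join.

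Next I would verify the hypotheses of Theorem~\ref{teorema2}. As each $G_i$ is $d_i$-regular and $S_i = V(G_i)$, the convention recorded in Section~1 (a $k$-regular graph with $S = V(G)$ is regarded as $(k,0)$-regular) shows that each $S_i$ lies in the admissible family, being $(d_i, 0)$-regular with $k_i = d_i$ and $\tau_i = 0$. Since moreover $S_i = V(G_i)$ for both indices, the equality clause of Theorem~\ref{teorema2} applies and gives
\begin{equation*}
\sigma(G_1 \vee G_2) = \left(\bigcup_{i=1}^{2}\sigma(A(G_i)) \setminus \{d_i\}\right) \cup \sigma(M'),
\end{equation*}
where $M'$ is the matrix of \eqref{matrix_m2}. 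Specializing \eqref{matrix_m2} to $p = 2$ with $\delta_{1,2} = \delta_{2,1} = 1$ reproduces precisely the stated $2 \times 2$ matrix $M'$.

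It then remains only to compute $\sigma(M')$ explicitly. Its characteristic polynomial is $\lambda^2 - (d_1 + d_2)\lambda + (d_1 d_2 - n_1 n_2)$, and regrouping the discriminant $(d_1 + d_2)^2 - 4(d_1 d_2 - n_1 n_2)$ as $(d_1 - d_2)^2 + 4 n_1 n_2$ turns the quadratic formula into the closed forms \eqref{beta1} and \eqref{beta2} for $\beta_1$ and $\beta_2$. I do not anticipate any genuine obstacle: the entire weight of the corollary rests on Theorem~\ref{teorema2}, and the only points that deserve a moment's care are the correct choice $H = K_2$, $S_i = V(G_i)$ (so that the constraint becomes vacuous and recovers the join) and the elementary simplification of the discriminant.
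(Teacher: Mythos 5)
Your proposal is correct and follows exactly the route the paper intends: the paper states Corollary~\ref{cor1} as a direct consequence of Theorem~\ref{teorema2}, which is precisely your specialization to $H=K_2$, $S_i=V(G_i)$, followed by the elementary computation of $\sigma(M')$. The details you supply (checking that the constrained join collapses to $G_1\vee G_2$, the $(d_i,0)$-regularity convention, and the discriminant simplification) are the ones the paper leaves implicit, and they are all accurate.
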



\begin{corollary}\label{cor2}
Consider a $d_i$-regular graph of order $n_i$, for $i=1, 2$,  and the graph $G=G_1 \vee G_2$ of order $n=n_1+n_2$. Then
\begin{eqnarray}
s(G) & = & \left\{\begin{array}{ll}
                \sqrt{(d_1-d_2)^2+4n_1n_2}, & \hbox{if } \lambda_n(G)=\beta_2\\
                \frac{d_2-d_1 + \sqrt{(d_1-d_2)^2+4n_1n_2}}{2}+s(G_1), & \hbox{if } \lambda_n(G)=\lambda_{n_1}(G_1)\\
                \frac{d_1-d_2 + \sqrt{(d_1-d_2)^2+4n_1n_2}}{2}+s(G_2), & \hbox{if } \lambda_n(G)=\lambda_{n_2}(G_2).
                \end{array}
              \right.\label{marca}
\end{eqnarray}
Furthermore, setting $R=\sqrt{(d_1-d_2)^2+4n_1n_2}$,
$$
s(G) = \max \{R, \frac{d_2-d_1 + R}{2}+s(G_1), \frac{d_1-d_2 + R}{2}+s(G_2)\}.
$$
\end{corollary}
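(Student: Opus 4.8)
The plan is to read off the full multiset $\sigma(G)$ from Corollary~\ref{cor1} and then locate its largest and smallest entries, since $A(G)$ is a symmetric matrix of order $n$ and hence $s(G)=\lambda_1(G)-\lambda_n(G)$. By Corollary~\ref{cor1},
$$\sigma(G)=\left(\sigma(A(G_1))\setminus\{d_1\}\right)\cup\left(\sigma(A(G_2))\setminus\{d_2\}\right)\cup\{\beta_1,\beta_2\},$$
with $\beta_1,\beta_2$ given by \eqref{beta1}--\eqref{beta2}. Writing $R=\sqrt{(d_1-d_2)^2+4n_1n_2}$, the first step is to show that $\beta_1=\lambda_1(G)$. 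Since $G_i$ is $d_i$-regular, its largest eigenvalue is $d_i$, so every entry contributed by $\sigma(A(G_i))\setminus\{d_i\}$ is at most $d_i\le\max\{d_1,d_2\}$. On the other hand $R>|d_1-d_2|$ (because $4n_1n_2>0$) yields $\beta_1=\tfrac{d_1+d_2+R}{2}>\tfrac{d_1+d_2+|d_1-d_2|}{2}=\max\{d_1,d_2\}\ge\beta_2$, so $\beta_1$ strictly dominates every other entry of $\sigma(G)$ and $\lambda_1(G)=\beta_1$.

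Next I would identify $\lambda_n(G)$. The entries of $\sigma(G)$ other than $\beta_1$ are $\beta_2$, the eigenvalues of $G_1$ after deleting a single copy of $d_1$, and those of $G_2$ after deleting a single copy of $d_2$; since $d_i$ is the \emph{largest} eigenvalue of $G_i$, removing one copy of it does not alter the smallest eigenvalue $\lambda_{n_i}(G_i)$, which remains present in the multiset. Hence
$$\lambda_n(G)=\min\{\beta_2,\lambda_{n_1}(G_1),\lambda_{n_2}(G_2)\},$$
and consequently $s(G)=\beta_1-\lambda_n(G)=\max\{\beta_1-\beta_2,\ \beta_1-\lambda_{n_1}(G_1),\ \beta_1-\lambda_{n_2}(G_2)\}$.

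The final step is a routine substitution in each of the three cases, using the regularity identity $s(G_i)=\lambda_1(G_i)-\lambda_{n_i}(G_i)=d_i-\lambda_{n_i}(G_i)$, i.e. $\lambda_{n_i}(G_i)=d_i-s(G_i)$. If $\lambda_n(G)=\beta_2$ then $s(G)=\beta_1-\beta_2=R$; if $\lambda_n(G)=\lambda_{n_1}(G_1)$ then $s(G)=\beta_1-(d_1-s(G_1))=\tfrac{d_2-d_1+R}{2}+s(G_1)$; and the case $\lambda_n(G)=\lambda_{n_2}(G_2)$ is symmetric, giving $\tfrac{d_1-d_2+R}{2}+s(G_2)$. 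This establishes the case analysis \eqref{marca}, and substituting these three quantities into the $\max$ expression for $\beta_1-\lambda_n(G)$ produces the closing displayed formula.

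I do not anticipate a serious obstacle; the only point requiring genuine care is the ordering argument, namely confirming that $\beta_1$ is the global maximum even when some $G_i$ is disconnected, so that $d_i$ has multiplicity greater than one and a second copy of $d_i$ survives in $\sigma(G)$ after one copy is removed. The strict inequality $R>|d_1-d_2|$ is precisely what forces $\beta_1>\max\{d_1,d_2\}$ and thereby resolves this, so no additional connectivity hypothesis on the $G_i$ is needed.
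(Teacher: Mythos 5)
Your proposal is correct and follows essentially the same route as the paper's proof: read the spectrum of $G$ from Corollary~\ref{cor1}, use $\lambda_{n_i}(G_i)=d_i-s(G_i)$, and identify the extreme eigenvalues. The only difference is that you explicitly verify $\lambda_1(G)=\beta_1$ via $R>|d_1-d_2|$ (a point the paper leaves implicit), which is a worthwhile detail but not a different approach.
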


\begin{proof}
According to Corollary~\ref{cor1}, $\sigma(A(G)) = \bigcup_{i=1}^{2}\left(\sigma(A(G_i)) \setminus \{d_i\}\right) \cup \{\beta_1,\beta_2\},$
where $\beta_1$ and $\beta_2$ have the values \eqref{beta1} and \eqref{beta2}, respectively. On the other hand, $\lambda_{n_i}(G_i)=d_i-s(G_i)$,
for $i=1,2$. Therefore, the equalities in \eqref{marca} follows, as well as the second part.
\end{proof}

\begin{corollary}\label{cor3}
Let $G_i$ be a $d_i$-regular graph of order $n_i$, for $i=1,2$, and $G = G_1 \vee G_2$. If $|d_1 - d_2| > |n_1 - n_2|$, then
$s(G) > n = n_1+n_2$.
\end{corollary}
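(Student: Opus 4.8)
The plan is to lean entirely on the closed form for $s(G)$ already established in Corollary~\ref{cor2}. Setting $R=\sqrt{(d_1-d_2)^2+4n_1n_2}$ as there, that corollary gives
$$
s(G) = \max\left\{R,\ \tfrac{d_2-d_1+R}{2}+s(G_1),\ \tfrac{d_1-d_2+R}{2}+s(G_2)\right\},
$$
and in particular $s(G)\ge R$. Hence it suffices to prove the cleaner statement $R>n$; the other two terms in the maximum need not even be examined.

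To establish $R>n$, I would simply compare radicands, which is legitimate since both $R$ and $n=n_1+n_2$ are positive. Squaring, the desired inequality $R^2>n^2$ reads $(d_1-d_2)^2+4n_1n_2>(n_1+n_2)^2$. Expanding the right-hand side and cancelling the common term $4n_1n_2$ against $2n_1n_2$ reduces this to $(d_1-d_2)^2>n_1^2-2n_1n_2+n_2^2=(n_1-n_2)^2$, that is, to $|d_1-d_2|>|n_1-n_2|$, which is precisely the hypothesis. Chaining the implications backwards, $s(G)\ge R>n$, as required.

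The main point worth stressing in the write-up is that the entire content of the corollary is the algebraic identity $R^2-n^2=(d_1-d_2)^2-(n_1-n_2)^2$, so the hypothesis is not merely sufficient but exactly the condition forcing the largest eigenvalue gap $\beta_1-\beta_2=R$ (the spread contributed by the join matrix $M'$) to exceed $n$. There is essentially no genuine obstacle here: the work is a one-line squaring argument, and the only thing to be careful about is justifying that we may pass to squares, which is immediate from positivity of both sides. I would also remark that the bound is driven by the first entry $R$ of the maximum alone, so the conclusion holds regardless of the internal spreads $s(G_1)$ and $s(G_2)$.
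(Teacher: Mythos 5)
Your proposal is correct and takes essentially the same route as the paper: both arguments reduce to $s(G)\ge R=\sqrt{(d_1-d_2)^2+4n_1n_2}$ (you obtain this from the maximum formula of Corollary~\ref{cor2}, the paper directly from the fact that $\beta_1,\beta_2$ of Corollary~\ref{cor1} are eigenvalues of $A(G)$), followed by the identical squaring equivalence $R>n_1+n_2 \Leftrightarrow (d_1-d_2)^2>(n_1-n_2)^2$. The difference in citation is immaterial, since Corollary~\ref{cor2} is itself derived from Corollary~\ref{cor1}.
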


\begin{proof}
By construction, it is immediate that the order of $G$ is $n=n_1+n_2$. Taking into account that $\beta_1$ and $\beta_2$, in \eqref{beta1}
and \eqref{beta2} of Corollary~\ref{cor1}, respectively, are eigenvalues of the adjacency matriz of $G = G_1 \vee G_2$, then
$$
s(G) \ge \beta_1 - \beta_2 = \sqrt{(d_1-d_2)^2+4n_1n_2} > n_1 + n_2 = n.
$$
Notice that
$\sqrt{(d_1-d_2)^2+4n_1n_2} > n_1 + n_2 \Leftrightarrow (d_1-d_2)^2+4n_1n_2 > n_1^2 + n_2^2 + 2n_1n_2 \Leftrightarrow (d_1-d_2)^2 > (n_1-n_2)^2$.
\end{proof}

Considering the complete graph graph $K_k$, for which $\sigma(K_k)=\{(-1)^{[k-1]},k-1\}$, and the null graph $\overline{K}_{n-k}$,
for which e $\sigma(\overline{K}_{n-k})=\{0^{[n-k]}\}$, and denote the join of these graphs by $G(n, k)$ (that is
$G(n, k)=K_k \vee \overline{K}_{n-k}$), according to Corollary~\ref{cor1}, $\sigma(G(n, k)) = \{(-1)^{[k-1]},0^{[n-k-1]},\beta_1,\beta_2\}$,
with
\begin{eqnarray*}
\beta_1 &=& \frac{k-1 + \sqrt{(k-1)^2+4k(n-k)}}{2}\\
\beta_2 &=& \frac{k-1 - \sqrt{(k-1)^2+4k(n-k)}}{2}.
\end{eqnarray*}
Therefore, $s(G(n, k)) = \beta_1 - \beta_2 = \sqrt{(k-1)^2+4k(n-k)}$. Furthermore, when $\frac{n+1}{3} < k < n-1$, the  hypothesis of
Corollary~\ref{cor3} hold for these graphs and then $s(G(n, k))>n$.


\begin{theorem}\cite{GHK2001}\label{spread1}
Among the family of graphs $G(n, k)= K_k \vee \overline{K}_{n-k}$, with $1 \le k \le n-1$, the maximum of $s(G(n, k))$ is attained
when $k=\lfloor 2n/3 \rfloor$.
\end{theorem}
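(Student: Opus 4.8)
The plan is to reduce the problem to maximizing a single concave quadratic over the integers. From the discussion preceding the statement, we already have the closed form $s(G(n,k)) = \sqrt{(k-1)^2 + 4k(n-k)}$ for every $k$ with $1 \le k \le n-1$. Since $t \mapsto \sqrt{t}$ is strictly increasing on the nonnegative reals, maximizing $s(G(n,k))$ is equivalent to maximizing the radicand $f(k) = (k-1)^2 + 4k(n-k)$, and working with $f$ avoids carrying the square root through the computation.

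First I would expand $f$ to expose its shape: $f(k) = -3k^2 + (4n-2)k + 1$. This is a downward-opening parabola in the real variable $k$, so it has a unique real maximizer at the vertex $k^\ast = \frac{2n-1}{3}$, and $f$ is strictly decreasing in $|k - k^\ast|$. Consequently, among integers the maximum of $f$ is attained at the integer closest to $k^\ast$, provided that integer lies in the admissible range $\{1, \ldots, n-1\}$.

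The crux is to identify that closest integer as $\lfloor 2n/3\rfloor$. The key observation is that $k^\ast$ is never a half-integer: $\frac{2n-1}{3} = j + \tfrac12$ would force $4n-2 = 6j+3$, impossible since the left side is even and the right side odd. Hence the nearest integer is unambiguous and equals $\lfloor k^\ast + \tfrac12\rfloor = \lfloor \tfrac{2n}{3}\rfloor$; I would verify this last equality by splitting into the three residue classes $n \equiv 0, 1, 2 \pmod 3$, checking in each that $\lfloor k^\ast + \tfrac12 \rfloor$ coincides with $\lfloor 2n/3\rfloor$ (the cases $n \equiv 0,1$ put $k^\ast$ at distance $1/3$ from $\lfloor 2n/3\rfloor$, while $n \equiv 2$ makes $k^\ast = \lfloor 2n/3\rfloor$ exactly). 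Finally I would confirm that $\lfloor 2n/3\rfloor \in \{1, \ldots, n-1\}$ for all $n \ge 2$, so the unconstrained integer optimum is in fact feasible.

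The main obstacle is purely the arithmetic bookkeeping in this last step: the vertex sits at $\frac{2n-1}{3}$ rather than at $\frac{2n}{3}$, so one must be careful that rounding the true vertex still lands on $\lfloor 2n/3\rfloor$ and not on a neighbouring integer. The parity argument ruling out a tie is what makes the nearest-integer selection clean and the maximizer unique.
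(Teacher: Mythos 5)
Your proposal is correct, but there is nothing in the paper to compare it against: Theorem~\ref{spread1} is stated with a citation to \cite{GHK2001} and the paper gives no proof of it, only the ingredient you start from, namely the closed form $s(G(n,k))=\sqrt{(k-1)^2+4k(n-k)}$ derived just before the statement via Corollary~\ref{cor1}. Taking that formula as given, your argument is complete and self-contained: monotonicity of the square root reduces the problem to maximizing the radicand, the expansion $f(k)=-3k^2+(4n-2)k+1$ is correct, the vertex is indeed $k^{\ast}=\tfrac{2n-1}{3}$, and since $f(k)=f(k^{\ast})-3(k-k^{\ast})^2$ the integer maximizer is the integer nearest to $k^{\ast}$. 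Your parity argument ruling out a half-integer vertex ($4n-2$ is even while $6j+3$ is odd) is what guarantees the nearest integer is unique, and the case check over $n\equiv 0,1,2 \pmod 3$ correctly identifies it as $\lfloor 2n/3\rfloor$ (at distance $1/3$, $1/3$, and $0$ from $k^{\ast}$, respectively), which lies in the admissible range $\{1,\ldots,n-1\}$ for all $n\ge 2$. In fact your argument proves slightly more than the statement asks: the maximizer is \emph{unique}, which is the form of the claim conjectured later in the paper. The one point worth making explicit if this were to be written up: the validity of the formula $s(G(n,k))=\beta_1-\beta_2$ over the whole range $1\le k\le n-1$ requires $\beta_2\le -1\le 0$, i.e.\ that $\beta_2$ really is the minimum eigenvalue; this is implicit in the paper's preceding discussion and holds precisely because $k\le n-1$, so your reliance on it is legitimate but deserves a sentence.
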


In \cite{GHK2001} the following conjecture was checked by computer for  graphs of order $n \le 9$.

\begin{conjecture}\cite{GHK2001}
The maximum spread $s(n)$ of the graphs of order $n$ is attained only by $G(n,\lfloor 2n/3 \rfloor)$, that is,
$s(n) = \lfloor(4/3)(n^2-n+1)\rfloor^{1/2}$ and so $\frac{1}{\sqrt{3}}(2n-1) < s(n) < \frac{1}{\sqrt{3}}(2n-1) + \frac{\sqrt{3}}{4n-2}$.
\end{conjecture}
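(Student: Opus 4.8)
The plan is to treat the conjecture as an extremal problem over the finite set of all graphs of order $n$, to prove that every spread-maximal graph must be of the form $K_a \vee \overline{K}_{n-a}$, and only then to invoke Theorem~\ref{spread1} to fix the optimal parameter and recover the closed form. Since there are finitely many graphs of order $n$, a spread-maximal graph $G$ exists; fix one, assume it connected, write $A=A(G)$, let $u\ge 0$ be a unit Perron eigenvector for $\lambda_1(G)$ and $v$ a unit eigenvector for $\lambda_n(G)$, so that $s(G)=\lambda_1(G)-\lambda_n(G)=u^{T}Au-v^{T}Av$. The first step is to record the local optimality conditions obtained by toggling a single edge: deleting an edge of $G$ cannot increase the spread, and inserting a non-edge cannot either. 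Evaluating the effect on $u^{T}Au-v^{T}Av$ through the entrywise derivatives $\partial_{a_{ij}}(u^{T}Au)=2u_iu_j$ and $\partial_{a_{ij}}(v^{T}Av)=2v_iv_j$ shows that at the optimum adjacency is governed by comparing $u_iu_j$ with $v_iv_j$: concretely $ij\in E(G)$ forces $u_iu_j\ge v_iv_j$ and $ij\notin E(G)$ forces $u_iu_j\le v_iv_j$. Thus $G$ is a threshold-type graph entirely determined by the rank-two data of the two eigenvectors.

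Next I would exploit the sign pattern of $v$. Since $u>0$, partition the vertices as $V_+=\{i: v_i>0\}$, $V_-=\{i: v_i<0\}$ and $V_0=\{i: v_i=0\}$. Because $u_iu_j>0=v_iv_j$ whenever $i\in V_0$, every vertex of $V_0$ is universal; and because $v_iv_j<0<u_iu_j$ for $i\in V_+$, $j\in V_-$, each vertex of $V_+$ is adjacent to each vertex of $V_-$. Hence $G$ decomposes as a join in which $V_+$ and $V_-$ are completely joined and $V_0$ is joined to all other vertices, while the induced subgraphs $G[V_+]$ and $G[V_-]$ are themselves threshold graphs determined by comparing $u_iu_j$ with $v_iv_j$ inside each class. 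The target structure $K_a\vee\overline{K}_{n-a}$ is exactly the case in which one of these two induced threshold graphs (the one absorbing $V_0$) is complete and the other is edgeless; Corollary~\ref{cor1} then supplies the spectrum and confirms that $\lambda_1,\lambda_n$ are precisely the eigenvalues $\beta_1,\beta_2$ of \eqref{beta1}--\eqref{beta2}.

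The third step is to show that the two within-class threshold graphs collapse to these extreme cases, so that $G$ is exactly a join of a clique and an independent set. The mechanism is to prove that $u$ and $v$ are each constant on $V_+$ and on $V_-$, making the sign partition equitable; by the blow-up reasoning behind Theorem~\ref{teorema2} and the quotient matrix $M'$ of \eqref{matrix_m2}, the extreme eigenvalues are then read off from a small matrix and the spread becomes an explicit function of the class sizes $a=|V_+\cup V_0|$ and $b=|V_-|$ with $a+b=n$. At this point $G=K_a\vee\overline{K}_b=G(n,a)$, and Theorem~\ref{spread1} shows the maximum over $a$ occurs at $a=\lfloor 2n/3\rfloor$. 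Substituting $k=\lfloor 2n/3\rfloor$ into $s(G(n,k))=\sqrt{(k-1)^2+4k(n-k)}$ gives $s(n)=\lfloor(4/3)(n^2-n+1)\rfloor^{1/2}$, and the estimate $\frac{1}{\sqrt3}(2n-1)<s(n)<\frac{1}{\sqrt3}(2n-1)+\frac{\sqrt3}{4n-2}$ follows by expanding the square root about its continuous optimizer $k=(2n-1)/3$.

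The hard part will be the third step: ruling out all intermediate threshold configurations and proving constancy of the eigenvectors on the sign classes. This is precisely where the global force of the conjecture lives, since the edge-switching conditions of the first step are only \emph{necessary} and are satisfied by many non-join graphs. I expect this to demand a genuine stability argument: passing to the graphon limit and solving the continuous relaxation $\max_W\,(\mu_{\max}(W)-\mu_{\min}(W))$, whose extremizer is the join of a clique-graphon of measure $2/3$ with an empty graphon of measure $1/3$, and then upgrading ``asymptotically $G(n,\lfloor 2n/3\rfloor)$'' to ``exactly'' via a quantitative robustness estimate. Two further obstacles remain. First, \emph{uniqueness} (that no other graph attains $s(n)$) requires the edge-toggling comparisons and the optimizer of Theorem~\ref{spread1} to be strict up to the floor. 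Second, the finitely many small orders $n$, where the graphon approximation is too coarse, must be settled by a separate finite verification, as was already done by computer for $n\le 9$.
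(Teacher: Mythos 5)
The first thing to say is that the paper contains no proof of this statement: it is a Conjecture quoted from \cite{GHK2001}, and the only evidence the paper records is a computer verification for graphs of order $n\le 9$. So there is no authors' proof to compare against, and your proposal must be judged as a self-contained attempt on what is an open problem in the paper. Your first two steps are essentially sound once tightened. Existence of a maximizer is trivial by finiteness; the edge-toggling conditions should be derived not from ``entrywise derivatives'' (the perturbation is by $\pm 1$, not infinitesimal) but from the Rayleigh inequalities $\lambda_1(G')\ge u^{T}A(G')u$ and $\lambda_n(G')\le v^{T}A(G')v$ applied to the toggled graph $G'$, which do yield exactly your comparisons $u_iu_j\ge v_iv_j$ on edges and $u_iu_j\le v_iv_j$ on non-edges. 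The sign partition then gives the complete join between $V_+$ and $V_-$ and the universality of $V_0$; note, however, that this uses $u>0$, i.e.\ connectivity of the maximizer, which you assume without justification --- and for a disconnected maximizer with $u$ and $v$ supported on different components the first-order toggle condition is vacuous ($u_iu_j=0=v_iv_j$ across components), so connectivity itself requires an argument.

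The decisive gap is your third step, and you flag it yourself. The edge-toggle conditions are only necessary and are satisfied by a large family of threshold-type graphs; nothing in your outline forces $u$ and $v$ to be constant on the sign classes, nor $G[V_+]$ to be complete and $G[V_-]$ to be empty, and Theorem~\ref{spread1} only optimizes within the family $G(n,k)$ --- it cannot produce membership in that family. Your proposed remedy (pass to the graphon relaxation $\max_{W}\bigl(\mu_{\max}(W)-\mu_{\min}(W)\bigr)$, identify the extremizer as the join of a clique part of measure $2/3$ with an empty part of measure $1/3$, then upgrade to the exact discrete statement by a quantitative stability estimate, with a separate finite check for small $n$) is a plausible program --- indeed it is essentially the strategy by which this conjecture was eventually attacked in the later literature --- but as written it is a plan, not a proof: the solution of the continuous problem, the stability estimate, the strict uniqueness analysis up to the floor in $k=\lfloor 2n/3\rfloor$, and the finite verification beyond $n\le 9$ are all absent. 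So your proposal correctly reduces the conjecture to its hard core and stops there; it does not establish the statement, and since the statement is presented in the paper only as a conjecture, no argument of this length could be expected to.
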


\subsection{The spread of the generalized join of graphs}
Throughout this subsection we consider a graph $H$ of order $p$ and a family of regular graphs $\mathcal{F}=\{G_1, \ldots, G_p\}$, where
each regular graph $G_i$ has degree $d_i$ and order $n_i$. We consider also $M=A(H)N + D$, where $N=\text{diag}(n_1, \ldots, n_p)$ and
$D=\text{diag}(d_1, \ldots, d_p)$, and we define $d_{i^*} - s(G_{i^*}) = \min \{d_i - s(G_i): i=1, \ldots, p\}$ and the matrix
$$
P = \left(\begin{array}{cccc}
            0             & \sqrt{n_1n_2} & \ldots & \sqrt{n_1n_p} \\
           \sqrt{n_1n_2}  & 0             & \ldots & \sqrt{n_2n_p} \\
           \vdots         & \vdots        & \ddots & \vdots \\
           \sqrt{n_1n_p}  & \sqrt{n_2n_p} & \ldots & 0 \\
           \end{array}
    \right).
$$
Using the above notation, with the following theorems, we state upper and lower bounds on the spread of $G=\bigvee_{H}\mathcal{F}$.

\begin{theorem}\label{FinalResultTh}
If $G=\bigvee_{H}\mathcal{F}$, then
\begin{equation}\label{FinalResult1}
s(G) = s(M) + \max_{1 \le i \le p}\{\lambda_p(M) + s(G_i) - d_i,0\}.
\end{equation}
Furthermore,
\begin{equation}\label{FinalResult2}
s(G) \ge n_{\downarrow}\left(s(H) - ( \widetilde{d}_{\uparrow} - \widetilde{d}_{\downarrow})\right)
         - (n_{\uparrow} - n_{\downarrow})\left(\lambda_p(H) + \widetilde{d}_{\uparrow}\right)
\end{equation}
where $\widetilde{d}_{\uparrow} = \max_{1 \le i \le p}{\frac{d_i}{n_i}}$ ($\widetilde{d}_{\downarrow} = \min_{1 \le i \le p}{\frac{d_i}{n_i}}$),
and $n_{\uparrow} = \max_{1 \le i \le p}{n_i}$ ($n_{\downarrow} = \min_{1 \le i \le p}{n_i}$).
\end{theorem}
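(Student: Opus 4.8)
The starting point is Theorem~\ref{teorema2} with each $S_i=V(G_i)$, so that
$\sigma(G)=\left(\bigcup_{i=1}^{p}\sigma(G_i)\setminus\{d_i\}\right)\cup\sigma(M')$,
where $\sigma(M')=\sigma(M)$. Since $s(G)=\lambda_1(G)-\lambda_n(G)$, I would locate these two extreme eigenvalues within the two contributing pieces. For the top eigenvalue, I would argue that $\lambda_1(M)$ dominates: $M'=D+KA(H)K$ has nonnegative entries and its largest eigenvalue is at least the largest $d_i$, while every eigenvalue of $G_i$ other than $d_i$ is strictly below $d_i$; hence $\lambda_1(G)=\lambda_1(M)$. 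For the bottom eigenvalue, the candidates are $\lambda_p(M)$ (the least eigenvalue of $M$) and the numbers $\lambda_{n_i}(G_i)=d_i-s(G_i)$ coming from the individual graphs. Thus
$$
\lambda_n(G)=\min\Bigl\{\lambda_p(M),\ \min_{1\le i\le p}(d_i-s(G_i))\Bigr\}.
$$
Subtracting from $\lambda_1(G)=\lambda_1(M)=\lambda_p(M)+s(M)$ and writing the minimum of the two candidates as $\lambda_p(M)-\max_i\{\lambda_p(M)+s(G_i)-d_i,0\}$ then yields \eqref{FinalResult1} after cancellation.

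\textbf{The plan for \eqref{FinalResult2}.}
Starting from \eqref{FinalResult1}, dropping the $0$ in the inner maximum gives the lower bound
$s(G)\ge s(M)+\lambda_p(M)+s(G_{i^*})-d_{i^*}\ge s(M)+\lambda_p(M)$,
so it suffices to bound $s(M)+\lambda_p(M)=\lambda_1(M)$ and, separately, $\lambda_p(M)$ from below. Recall $M'=D+KA(H)K$ with $K=\operatorname{diag}(\sqrt{n_1},\dots,\sqrt{n_p})$. The idea is to compare $M'$ with a scalar multiple of $A(H)$ by controlling the diagonal perturbation $D$ and the scaling $K$. I would use the substitution $KA(H)K$ whose $(i,j)$ entry is $\sqrt{n_in_j}\,\delta_{i,j}(H)$, and estimate its eigenvalues against those of $n_{\downarrow}A(H)$ and $n_{\uparrow}A(H)$ via a Weyl-type or Rayleigh-quotient argument, since $n_{\downarrow}\le\sqrt{n_in_j}\le n_{\uparrow}$.

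\textbf{Carrying out the Rayleigh-quotient estimates.}
For the largest eigenvalue I would bound $\lambda_1(M')=\max_{\|x\|=1}x^T(D+KA(H)K)x$ from below by testing against (a rescaling of) the Perron vector of $A(H)$, extracting a factor $n_{\downarrow}s(H)$ from the off-diagonal part together with a $\widetilde d_{\downarrow}$-type contribution from $D=\operatorname{diag}(d_i)=\operatorname{diag}(n_i\cdot d_i/n_i)$; for the least eigenvalue $\lambda_p(M')=\min_{\|x\|=1}x^T(D+KA(H)K)x$ I would bound it from below using $\lambda_p(H)$ together with the diagonal term, now controlled by $\widetilde d_{\uparrow}$. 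Combining the two estimates and collecting the coefficients of $n_{\downarrow}$, $n_{\uparrow}-n_{\downarrow}$, $\widetilde d_{\uparrow}-\widetilde d_{\downarrow}$ should reproduce the right-hand side of \eqref{FinalResult2}.

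\textbf{The main obstacle.}
The delicate step is the second one: relating the spectrum of the scaled matrix $KA(H)K$ to that of $A(H)$ while simultaneously absorbing the non-uniform diagonal $D$. The scaling by $K$ is not a similarity that preserves $A(H)$'s spectrum—it genuinely distorts eigenvalues—so the bookkeeping that turns $\sqrt{n_in_j}$ into the clean combination $n_{\downarrow}s(H)-(n_{\uparrow}-n_{\downarrow})\lambda_p(H)$ requires a careful choice of test vectors in the Rayleigh quotients rather than a direct eigenvalue comparison. Keeping the diagonal contribution $\widetilde d_{\uparrow}-\widetilde d_{\downarrow}$ separate from the off-diagonal spread, and verifying that the worst-case sign choices align to produce a valid \emph{lower} bound (not merely an estimate of indeterminate direction), is where I expect the real work to lie.
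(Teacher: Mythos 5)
Your treatment of \eqref{FinalResult1} is correct and essentially the same as the paper's: identify $\lambda_1(G)=\lambda_1(M)$ via Theorem~\ref{teorema2}, locate $\lambda_n(G)$ among $\lambda_p(M)$ and the numbers $d_i-s(G_i)=\lambda_{n_i}(G_i)$, and rewrite $\lambda_1(M)-\min\{\cdot\}$ as $s(M)$ plus the stated maximum. No complaints there.

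Your plan for \eqref{FinalResult2}, however, contains a structural error that no choice of test vectors can repair. First, your reduction is lossy: from \eqref{FinalResult1} you keep only $s(G)\ge s(M)+\lambda_p(M)=\lambda_1(M)$, discarding the term $-\lambda_n(G)\ge 0$, and then claim it suffices to bound $\lambda_1(M)$ from below by the right-hand side of \eqref{FinalResult2}. That is impossible in general: take $H=K_2$ and $G_1=G_2=K_1$, so $G=K_2$; the right-hand side of \eqref{FinalResult2} equals $n_{\downarrow}s(H)=2$, while $\lambda_1(M)=1$. (The inequality $s(G)\ge 2$ holds here with equality precisely because the discarded term $-\lambda_n(G)=1$ is essential.) Second, you propose --- twice, consistently --- to bound the least eigenvalue $\lambda_p(M')$ \emph{from below}; for a lower bound on a spread you need the least eigenvalue bounded from \emph{above}. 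A lower bound on $\lambda_p(M)$ can only produce upper bounds on $s(M)$, so this half of your plan points in the wrong direction. Third, your expectation of extracting a factor $n_{\downarrow}s(H)$ from the top-eigenvalue estimate alone is misplaced: the top estimate can only involve $\lambda_1(H)$, and $s(H)$ emerges only after the two one-sided estimates are subtracted.

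For comparison, the paper's argument is: take a unit eigenvector $x$ of $A(H)$, set $y=K^{-1}x$, and compute the Rayleigh quotient
\begin{equation*}
\frac{y^TM'y}{y^Ty}=\frac{\lambda+\sum_{i=1}^{p}{\tfrac{d_i}{n_i}x_i^2}}{\sum_{i=1}^{p}{\tfrac{1}{n_i}x_i^2}},
\end{equation*}
which lies between $\min\sigma(M')$ and $\max\sigma(M')$; using $\lambda=\lambda_1(H)$ gives $\lambda_1(G)\ge n_{\downarrow}\bigl(\lambda_1(H)+\widetilde{d}_{\downarrow}\bigr)$, using $\lambda=\lambda_p(H)$ gives an upper bound on $\lambda_n(G)\le\min\sigma(M)$, and subtracting yields \eqref{FinalResult2}. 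Finally, the sign bookkeeping you flag as the main obstacle is indeed where the difficulty lies, and in fact the paper itself stumbles on it: when $\lambda_p(H)+\widetilde{d}_{\uparrow}<0$, bounding the quotient above requires dividing the (negative) numerator bound by $1/n_{\downarrow}$, not by $1/n_{\uparrow}$ as the paper does, so the legitimate conclusion is $\lambda_n(G)\le n_{\downarrow}\bigl(\lambda_p(H)+\widetilde{d}_{\uparrow}\bigr)$ rather than $n_{\uparrow}\bigl(\lambda_p(H)+\widetilde{d}_{\uparrow}\bigr)$. Indeed \eqref{FinalResult2} as stated fails for $G=K_1\vee\overline{K}_2$, where $s(G)=2\sqrt{2}$ but the right-hand side equals $3$; what survives is the weaker bound $s(G)\ge n_{\downarrow}\bigl(s(H)-(\widetilde{d}_{\uparrow}-\widetilde{d}_{\downarrow})\bigr)$ asserted in the subsequent corollary. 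So your instinct about the delicacy of this step was sound, but your proposed route neither avoids the trap nor reaches the stated inequality.
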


\begin{proof}
According to Theorem~\ref{teorema2}, $\sigma(G) = \left(\bigcup_{i=1}^{p}{\sigma(G_i)\setminus \{d_i\}}\right) \cup \sigma(M).$ Then
$\forall i \in \{1, \ldots, p\} \; \lambda_{n_i}(G_i) = d_i - s(G_i) \in \sigma(G)$ and hence
$$
\lambda_n(G) \in \{d_i - s(G_i), i=1, \ldots, p\} \cup \{\lambda_n(M)\}.
$$
Since $\lambda_1(G)=\lambda_1(M)$ (notice that $\lambda_1(G) \ge d_i \; \forall i \in \{1, \ldots, p\}$),
the equality \eqref{FinalResult1} holds.\\
Now, we prove the inequality \eqref{FinalResult2}. Consider the symmetric matrix $M'=KA(H)K+D$ in \eqref{matrix_m2}, where
$K=\text{diag}(\sqrt{n_1}, \ldots, \sqrt{n_p})$, which is similar to the matrix $M$. Let $\left(\lambda,x\right)$ be an eigenpair of $H,$
where $x$ is such that $\sum_{i=1}^{p}{x_i^2}=1$. Setting $y=K^{-1}x$, then
\begin{eqnarray*}
\lambda_n(G) & \le & \min \sigma(M) = \min \sigma(M')\\
             & \le & \frac{y^T \left(K A(H)K+D\right)y}{y^Ty}\\
             &  =  & \frac{x^TA(H)x + x^T K^{-1}DK^{-1}x}{x^T K^{-2}x}\\
             &  =  & \frac{\lambda x^Tx + x^TDN^{-1}x}{\sum_{i=1}^{p}{\frac{x_i^2}{n_i}}} \\
             &  =  & \frac{\lambda + \sum_{i=1}^{p}{\frac{d_i}{n_i}x_i^2}}{\sum_{i=1}^{p}{\frac{1}{n_i}x_i^2}} \le \lambda_1(M') \le \lambda_1(G).
\end{eqnarray*}
Taking into account that $\widetilde{d}_{\uparrow} = \max_{1 \le i \le p}{\frac{d_i}{n_i}}$
($\widetilde{d}_{\downarrow} = \min_{1 \le i \le p}{\frac{d_i}{n_i}}$) and $n_{\uparrow} = \max_{1 \le i \le p}{n_i}$
($n_{\downarrow} = \min_{1 \le i \le p}{n_i}$), we may conclude the following.
\begin{itemize}
\item If $\lambda=\lambda_p(H)$, then
      $\lambda_n(G) \le  \frac{\lambda_p(H) + \widetilde{d}_{\uparrow}}{\frac{1}{n_{\uparrow}}} = n_{\uparrow}\left(\lambda_p(H) +
                         \widetilde{d}_{\downarrow}\right).$
\item If $\lambda=\lambda_1(H)$, then
      $\lambda_1(G) \ge  \frac{\lambda_1(H) + \widetilde{d}_{\downarrow}}{\frac{1}{n_{\downarrow}}} = n_{\downarrow}\left(\lambda_1(H) +
                        \widetilde{d}_{\uparrow}\right).$
\end{itemize}
Therefore, $s(G) \ge n_{\downarrow}\left(\lambda_1(H) + \widetilde{d}_{\downarrow}\right) - n_{\uparrow}\left(\lambda_p(H) +
\widetilde{d}_{\uparrow}\right) = n_{\downarrow}\left(\lambda_1(H) + \widetilde{d}_{\downarrow}\right) - n_{\downarrow}\left(\lambda_p(H) +
              \widetilde{d}_{\uparrow}\right) - (n_{\uparrow} - n_{\downarrow})\left(\lambda_p(H) + \widetilde{d}_{\uparrow}\right).$
\end{proof}

As immediate consequence of Theorem~\ref{FinalResultTh}, we have the following corollary.

\begin{corollary}
If the graph $H$ has at least one edge and $G=\bigvee_{H}\mathcal{F}$, then
$$
s(G) \ge n_{\downarrow}\left(s(H) - (\widetilde{d}_{\uparrow} - \widetilde{d}_{\downarrow})\right).
$$
\end{corollary}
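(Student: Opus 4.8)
The plan is to start directly from the lower bound \eqref{FinalResult2} already established in Theorem~\ref{FinalResultTh}, namely
$$
s(G) \ge n_{\downarrow}\left(s(H) - (\widetilde{d}_{\uparrow} - \widetilde{d}_{\downarrow})\right)
         - (n_{\uparrow} - n_{\downarrow})\left(\lambda_p(H) + \widetilde{d}_{\uparrow}\right),
$$
and to argue that the subtracted term is non-positive, so that simply discarding it yields the claimed bound. This is purely a sign analysis of the correction term; no new spectral computation is needed.

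First I would observe that $n_{\uparrow} - n_{\downarrow} \ge 0$ by the very definitions of $n_{\uparrow}$ and $n_{\downarrow}$ as the largest and smallest orders among the graphs of $\mathcal{F}$. It therefore suffices to show that the second factor $\lambda_p(H) + \widetilde{d}_{\uparrow}$ is non-positive, since the product of a non-negative and a non-positive quantity is non-positive, and its negative is then non-negative.

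To control that factor I would invoke two elementary facts recalled in Section~1. Since $H$ has at least one edge, $A(H)$ possesses a negative eigenvalue not greater than $-1$; as $\lambda_p(H)$ is the least eigenvalue of $H$, this gives $\lambda_p(H) \le -1$. On the other hand, each $G_i$ is $d_i$-regular of order $n_i$, so $d_i \le n_i - 1$, whence $d_i/n_i < 1$ for every $i$ and consequently $\widetilde{d}_{\uparrow} = \max_{1 \le i \le p} d_i/n_i < 1$. Combining the two estimates yields $\lambda_p(H) + \widetilde{d}_{\uparrow} < -1 + 1 = 0$.

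Putting these together, the product $(n_{\uparrow} - n_{\downarrow})\left(\lambda_p(H) + \widetilde{d}_{\uparrow}\right)$ is non-positive, so dropping it from \eqref{FinalResult2} preserves the inequality and produces
$$
s(G) \ge n_{\downarrow}\left(s(H) - (\widetilde{d}_{\uparrow} - \widetilde{d}_{\downarrow})\right),
$$
as required. The only point demanding any care—the ``main obstacle,'' such as it is—is verifying the sign of $\lambda_p(H) + \widetilde{d}_{\uparrow}$, which rests entirely on the two inequalities $\lambda_p(H) \le -1$ and $\widetilde{d}_{\uparrow} < 1$; once these are in hand the conclusion is immediate.
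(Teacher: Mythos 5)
Your proof is correct and takes essentially the same route as the paper's: both start from the bound \eqref{FinalResult2} of Theorem~\ref{FinalResultTh} and drop the correction term after checking it is non-positive, using $\lambda_p(H) \le -1$ (since $H$ has an edge) together with $\widetilde{d}_{\uparrow} \le 1$ and $n_{\uparrow} - n_{\downarrow} \ge 0$. Your only addition is spelling out $d_i \le n_i - 1$ to get the (slightly stronger, but equally sufficient) strict inequality $\widetilde{d}_{\uparrow} < 1$.
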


\begin{proof}
From \eqref{FinalResult2}, it follows
\begin{eqnarray}
s(G) & \ge & n_{\downarrow}\left(s(H) - ( \widetilde{d}_{\uparrow} - \widetilde{d}_{\downarrow})\right)
             - (n_{\uparrow} - n_{\downarrow})\left(\lambda_p(H) + \widetilde{d}_{\uparrow}\right) \nonumber \\
     & \ge & n_{\downarrow}\left(s(H) - (\widetilde{d}_{\uparrow} - \widetilde{d}_{\downarrow})\right) \label{marca1}
\end{eqnarray}
The inequality \eqref{marca1} is obtained taking into account that $\widetilde{d}_{\uparrow} \le 1$ and, since $H$
has at least one edge, $\lambda_p(H) \le -1$ and therefore,
$(n_{\uparrow} - n_{\downarrow})\left(\lambda_p(H) + \widetilde{d}_{\uparrow}\right) \le 0$.
\end{proof}

Using this corollary, and taking into account that $\widetilde{d}_{\uparrow}$ and $\widetilde{d}_{\downarrow}$ are both
in the interval $(0,1)$, it follows that $s(G) \ge n_{\downarrow}(s(H)-1)$.

\begin{theorem}
If $G=\bigvee_{H}\mathcal{F}$, then
$$
s(G) \le \max_{1 \le i \le p}{d_i}+\lambda_1(H)\lambda_1(P) - \min \{d_{i^*} - s(G_{i^*}),\lambda_p(M)\}.
$$
\end{theorem}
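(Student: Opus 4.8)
The plan is to reduce the claimed inequality to a single bound on $\lambda_1(M)$, and then to estimate $\lambda_1(M)$ by peeling off the diagonal part $D$ and dominating the adjacency part by a multiple of $P$.

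First I would rewrite the exact spread formula of Theorem~\ref{FinalResultTh}. Because $\lambda_p(M)+s(G_i)-d_i$ is largest exactly when $d_i-s(G_i)$ is smallest, that is at $i=i^*$, the maximum in \eqref{FinalResult1} equals $\max\{\lambda_p(M)-(d_{i^*}-s(G_{i^*})),0\}$. Adding $s(M)=\lambda_1(M)-\lambda_p(M)$ and resolving the two cases collapses \eqref{FinalResult1} to the identity
$$
s(G)=\lambda_1(M)-\min\{\lambda_p(M),\,d_{i^*}-s(G_{i^*})\}.
$$
Since the subtracted minimum here already coincides with the term $\min\{d_{i^*}-s(G_{i^*}),\lambda_p(M)\}$ in the statement, the whole theorem reduces to the single estimate $\lambda_1(M)\le\max_{1\le i\le p}d_i+\lambda_1(H)\lambda_1(P)$.

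To prove that estimate I would work with the symmetric matrix $M'=D+KA(H)K$ of \eqref{matrix_m2}, which is similar to $M$ and hence has $\lambda_1(M)=\lambda_1(M')$. Weyl's inequality for a sum of Hermitian matrices gives $\lambda_1(M')\le\lambda_1(D)+\lambda_1(KA(H)K)$, and $\lambda_1(D)=\max_i d_i$ as $D$ is diagonal. It thus remains to check $\lambda_1(KA(H)K)\le\lambda_1(H)\lambda_1(P)$.

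The crux, and the step I expect to be the main obstacle to phrase cleanly, is an entrywise domination. With $K=\mathrm{diag}(\sqrt{n_1},\dots,\sqrt{n_p})$, the $(i,j)$-entry of $KA(H)K$ is $\sqrt{n_in_j}\,\delta_{i,j}(H)$ off the diagonal and $0$ on it, whereas $\lambda_1(H)\,P$ has off-diagonal entry $\lambda_1(H)\sqrt{n_in_j}$ and zero diagonal. Evaluating the Rayleigh quotient of $A(H)$ at the unit vector supported on the two ends of any edge shows $\lambda_1(H)\ge1$, so $0\le KA(H)K\le\lambda_1(H)\,P$ entrywise. Both matrices are nonnegative and symmetric, and the spectral radius is monotone for the entrywise order on nonnegative matrices (Perron--Frobenius), while for nonnegative symmetric matrices the spectral radius is the largest eigenvalue; hence $\lambda_1(KA(H)K)\le\lambda_1(\lambda_1(H)P)=\lambda_1(H)\lambda_1(P)$. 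Combining this with the Weyl bound and the reduced identity yields the claim; if $H$ has no edge the assertion degenerates harmlessly, since then $KA(H)K=0$ and $\lambda_1(H)=0$.
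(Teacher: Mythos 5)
Your proof is correct, and it shares the paper's skeleton: both arguments reduce the theorem to the single estimate $\lambda_1(M)\le\max_{1\le i\le p}d_i+\lambda_1(H)\lambda_1(P)$ and both peel off the diagonal with Weyl's inequality $\lambda_1(D+X)\le\lambda_1(D)+\lambda_1(X)$. (The paper gets the reduction by writing $s(G)=\lambda_1(G)-\lambda_n(G)$ with $\lambda_1(G)=\lambda_1(M)$ and $\lambda_n(G)=\min\{d_{i^*}-s(G_{i^*}),\lambda_p(M)\}$ directly from Theorem~\ref{teorema2}; your algebraic collapse of \eqref{FinalResult1} yields exactly the same identity, so this part is equivalent.) The genuine difference is the key lemma used to bound the off-diagonal block. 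The paper views that block as a Hadamard product $A(H)\circ P$ (strictly speaking it identifies $M$ with the similar matrix $M'=D+KA(H)K$ of \eqref{matrix_m2}; your explicit use of the similarity is the cleaner phrasing) and invokes the Horn--Johnson inequality $\lambda_1(A\circ B)\le\lambda_1(A\otimes B)=\lambda_1(A)\lambda_1(B)$ for symmetric nonnegative matrices, the Hadamard product being a principal submatrix of the Kronecker product. You instead dominate entrywise, $0\le KA(H)K\le\lambda_1(H)P$, which is legitimate because every entry of $A(H)$ is at most $1\le\lambda_1(H)$ once $H$ has an edge, and then apply Perron--Frobenius monotonicity of the spectral radius together with the fact that for a symmetric nonnegative matrix the spectral radius equals its largest eigenvalue. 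Your route is more elementary (no Kronecker-product machinery), but it exploits the $0$--$1$ structure of $A(H)$ and needs $\lambda_1(H)\ge 1$, forcing the separate --- and correctly handled, trivial --- case of edgeless $H$; the paper's Hadamard--Kronecker bound is a general matrix inequality that applies with no case distinction. Both proofs are complete.
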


\begin{proof}
By Theorem~\ref{teorema2}, $\sigma(G) = \left(\bigcup_{i=1}^{p}{\sigma(G_i)\setminus \{d_i\}}\right) \cup \sigma(M),$ where
$M=D + A(H) \circ P$, with $D=\text{diag}(d_1, \ldots, d_p),$ and $\circ$ denotes the Hadamard product (see, for instance, \cite{HJ1991}).
Since when we have two symmetric nonnegative matrices of order $p$, $A$ and $B$, $\lambda_1(A+B) \le \lambda_1(A) + \lambda_1(B)$
and $\lambda_1(A \circ B) \le \lambda_1(A \otimes B) =\lambda_1(A)\lambda_1(B)$, where $\otimes$ is the Kronecker product, we may conclude
that
$$
\lambda_1(M) \le \lambda_1(D)+\lambda_1(A(H) \circ P) \le \lambda_1(D)+\lambda_1(H)\lambda_1(P)=\max_{1 \le i \le p}{d_i}+\lambda_1(H)\lambda_1(P).
$$
Since $\lambda_n(G)=\min \{d_{i^*} - s(G_{i^*}),\lambda_p(M)\}$, it follows that,
$$
s(G) \le \max_{1 \le i \le p}{d_i}+\lambda_1(H)\lambda_1(P) - \min \{d_{i^*} - s(G_{i^*}),\lambda_p(M)\}.
$$
\end{proof}

\begin{theorem}
If the graph $H$ has at least one edge and $G=\bigvee_{H}\mathcal{F}$, then
$$
s(M) \le s(G)  < s(M) + \max_{1 \le \le p} \{d_i\}.
$$
\end{theorem}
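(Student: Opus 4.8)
The plan is to read off everything from the exact spread formula established in Theorem~\ref{FinalResultTh},
$$
s(G)=s(M)+\max_{1\le i\le p}\bigl\{\lambda_p(M)+s(G_i)-d_i,\,0\bigr\},
$$
and to show that the second summand lies in the interval $[0,\max_i d_i)$. The lower bound $s(M)\le s(G)$ is then immediate: the bracketed maximum is taken jointly with $0$, hence is non-negative, and adding a non-negative quantity to $s(M)$ cannot decrease it. The whole effort therefore concentrates on the strict upper estimate of that maximum.

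First I would put each term into a transparent shape. Each $G_i$ is $d_i$-regular, so $\lambda_1(G_i)=d_i$ and $s(G_i)=d_i-\lambda_{n_i}(G_i)$; consequently $s(G_i)-d_i=-\lambda_{n_i}(G_i)$ and
$$
\lambda_p(M)+s(G_i)-d_i=\lambda_p(M)-\lambda_{n_i}(G_i).
$$
Two facts then close the argument. Since $A(G_i)$ has all row sums equal to $d_i$, every eigenvalue of $G_i$ lies in $[-d_i,d_i]$, so $\lambda_{n_i}(G_i)\ge -d_i\ge -\max_{1\le j\le p}d_j$. The decisive ingredient is the strict inequality $\lambda_p(M)<0$, and this is exactly where the hypothesis that $H$ has at least one edge enters.

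To obtain $\lambda_p(M)<0$ I would pass to the symmetric matrix $M'=D+KA(H)K$ of \eqref{matrix_m2}, which is similar to $M$, so that $\lambda_p(M)=\lambda_{\min}(M')$. Choosing an edge $rs\in E(H)$, the principal $2\times 2$ submatrix of $M'$ indexed by $\{r,s\}$ equals $\left(\begin{smallmatrix} d_r & \sqrt{n_rn_s}\\ \sqrt{n_rn_s} & d_s\end{smallmatrix}\right)$, whose determinant is $d_rd_s-n_rn_s$. The regularity bounds $d_r\le n_r-1$ and $d_s\le n_s-1$ give $d_rd_s\le(n_r-1)(n_s-1)<n_rn_s$, so this determinant is negative; hence the $2\times 2$ block has a negative eigenvalue. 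By Cauchy interlacing, $\lambda_{\min}(M')$ is at most the smallest eigenvalue of any principal submatrix, whence $\lambda_p(M)=\lambda_{\min}(M')<0$.

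Combining the three observations, every term obeys $\lambda_p(M)-\lambda_{n_i}(G_i)\le \lambda_p(M)+\max_j d_j<\max_j d_j$, and the constant alternative $0$ is likewise strictly below $\max_j d_j$ as soon as some $d_j\ge 1$; therefore the bracketed maximum is $<\max_{1\le i\le p}d_i$ and the claimed estimate $s(G)<s(M)+\max_i d_i$ follows. The one delicate point, and the main obstacle, is guaranteeing that the inequality is \emph{strict}: this hinges entirely on $\lambda_p(M)<0$, which is why the interlacing/determinant computation is needed rather than a mere norm bound. One should also record the degenerate family in which every $G_i$ is edgeless (all $d_i=0$, so $\max_i d_i=0$): there the extra term vanishes and $s(G)=s(M)$, so the strict bound is read under the tacit assumption that at least one block carries an edge.
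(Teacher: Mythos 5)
Correct, and essentially the paper's own argument: both read the two bounds off the exact formula of Theorem~\ref{FinalResultTh} and reduce strictness to $\lambda_p(M)<0$, which is obtained from the $2\times 2$ principal submatrix at an edge $rs\in E(H)$ together with eigenvalue interlacing. If anything, your version is more careful than the paper's: you apply interlacing to the symmetric matrix $M'$ (the paper writes $M$, which need not be symmetric), you supply the missing justification $d_rd_s\le(n_r-1)(n_s-1)<n_rn_s$ for the negative eigenvalue, and you correctly flag that when every $d_i=0$ the upper bound degenerates to equality $s(G)=s(M)$, so the strict inequality tacitly assumes some $d_i\ge 1$ --- a case the paper's Case 1 silently gets wrong.
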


\begin{proof}
By Theorem~\ref{FinalResultTh}, $s(G) = s(M) + \max_{1 \le i \le p}\{\lambda_p(M)-\lambda_{n_i}(G_i),0\}$.
\begin{enumerate}
\item If $\max_{1 \le i \le p}\{\lambda_p(M)-\lambda_{n_i}(G_i),0\} = 0,$ then the left inequality holds as equality and the right inequality is strict.
\item Otherwise, assume that $\exists i^* \in \{1, \ldots, p\}$ such that
      $\max_{1 \le i \le p}\{\lambda_p(M)-\lambda_{n_i}(G_i),0\} = \lambda_p(M) - \lambda_{n_i^*}(G_{i^*})$. Since,
      $$
      \lambda_p(M) - \lambda_{n_i^*}(G_{i^*}) <  - \lambda_{n_i^*}(G_{i^*}) \le d_{i^*} \le \max_{1 \le i \le p}\{d_i\},
      $$
      then the right inequality holds. Notice that, when $H$ has at least one edge, $\lambda_p(M)<0$. In fact, if $ij \in E(H)$, the matrix
      $B_{ij} = \left(\begin{array}{cc}
                             d_i           & \sqrt{n_in_j} \\
                             \sqrt{n_in_j} & d_j \\
                      \end{array}
                \right)$ is a principal submatrix of $P_{ij}MP^T_{ij}$, where $P_{ij}$ is permutation matrix. Therefore,
      $\lambda_p(M) = \lambda_p(P_{ij}MP^T_{ij}) \le \lambda_2(B_{ij}) < 0$.
      The left inequality follows from the fact that the eigenvalues of $M$ are also eigenvalues of $G$.
\end{enumerate}
\end{proof}
\subsection{An infinite family of non regular graphs of order $n$ with spread equal to $n$.}

\begin{theorem}\label{families_with_spread_n}
Consider the positive integres $p, q \ge 3$ and $n \in \mathbb N$ such that $n \ge p+q+3$. Let $H=P_3$ and let $\mathcal{F}=\{G_1, G_2, G_3\}$
be a family of graphs, where $G_1=C_p$, $G_2=C_q$ and $ G_3= C_{n-p-q}$. If $\mathcal{S}=\{S_1, S_2, S_3\}$ is such that $S_i = V(G_i)$ for
$i=1,2,3$, then the graph
\begin{equation}
G=\bigvee_{(H,\mathcal{S})}\mathcal{F}.\label{generalized_join_family}
\end{equation}
is non regular and is such that $s(G) \le n$. Furthermore, $s(G)=n$ if and only if $q=\frac{n}{2}$.
\end{theorem}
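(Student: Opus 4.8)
The plan is to exploit that, with $S_i=V(G_i)$, the operation $\bigvee_{(H,\mathcal S)}\mathcal F$ reduces to the ordinary $H$-join $\bigvee_{H}\mathcal F$, so that Theorem~\ref{teorema2} and Theorem~\ref{FinalResultTh} apply directly. Each $G_i=C_{n_i}$ is $2$-regular, hence $d_1=d_2=d_3=2$, and with $H=P_3$ (edges $12$ and $23$) the matrix $M'$ of \eqref{matrix_m2} is the $3\times3$ tridiagonal matrix with diagonal $(2,2,2)$ and off-diagonal entries $\sqrt{n_1n_2}=\sqrt{pq}$ and $\sqrt{n_2n_3}=\sqrt{q(n-p-q)}$. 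First I would compute its spectrum: since $n_1+n_3=p+(n-p-q)=n-q$, expanding the characteristic polynomial of $M'-2I$ gives the eigenvalues $0$ and $\pm\sqrt{q(n-q)}$, so that $\sigma(M')=\{2-\sqrt{q(n-q)},\,2,\,2+\sqrt{q(n-q)}\}$ and $s(M')=2\sqrt{q(n-q)}$.

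Next I would locate the extreme eigenvalues of $G$. By Theorem~\ref{teorema2}, $\sigma(G)=\bigl(\bigcup_{i}\sigma(C_{n_i})\setminus\{2\}\bigr)\cup\sigma(M')$, and every eigenvalue of a cycle lies in $[-2,2]$. The hypotheses $p,q\ge3$ and $n\ge p+q+3$ give $n-q\ge p+3\ge6$, hence $q(n-q)\ge18$ and $\sqrt{q(n-q)}>4$. Consequently $2+\sqrt{q(n-q)}>2\ge\lambda_1(C_{n_i})$ while $2-\sqrt{q(n-q)}<-2\le\lambda_{n_i}(C_{n_i})$, so the largest and smallest eigenvalues of $G$ are precisely $2\pm\sqrt{q(n-q)}$ and therefore $s(G)=2\sqrt{q(n-q)}$. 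Equivalently, this says that the correction term in \eqref{FinalResult1} vanishes: using the elementary bound $s(C_m)\le4$, one has $\lambda_3(M)+s(C_{n_i})-2=s(C_{n_i})-\sqrt{q(n-q)}\le4-\sqrt{q(n-q)}<0$ for every $i$, so Theorem~\ref{FinalResultTh} yields $s(G)=s(M)=2\sqrt{q(n-q)}$.

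The spread bound then follows from a one-line optimisation: $s(G)=2\sqrt{q(n-q)}\le n$ is equivalent to $4q(n-q)\le n^2$, i.e. to $(n-2q)^2\ge0$, which always holds and is an equality exactly when $q=\tfrac{n}{2}$. This establishes $s(G)\le n$, with equality if and only if $q=\tfrac{n}{2}$. For the regularity statement I would read off the degree sequence of $G$: a vertex of $G_1$ or of $G_3$ has degree $2+n_2=2+q$, whereas a vertex of $G_2$ has degree $2+n_1+n_3=2+(n-q)$; these two values differ exactly when $q\neq\tfrac{n}{2}$, which is what drives the non-regularity of the construction.

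The main obstacle is the second step, namely certifying that \emph{both} extremal eigenvalues of $G$ are supplied by the quotient matrix $M'$ and not by the constituent cycles. This is precisely where the arithmetic hypotheses $p,q\ge3$ and $n\ge p+q+3$ are used, via the uniform spectral confinement $-2\le\lambda(C_m)\le2$ and the resulting gap $\sqrt{q(n-q)}>4$; everything else, namely the eigenvalues of $M'$ and the final optimisation, is routine.
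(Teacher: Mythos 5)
Your spectral argument is correct and is essentially the paper's own proof: both reduce to Theorem~\ref{teorema2}, compute the spectrum of the quotient matrix (the paper works with $M$, you with the similar symmetric matrix $M'$; either way the eigenvalues are $2$ and $2\pm\sqrt{q(n-q)}$), observe that all cycle eigenvalues lie in $[-2,2]$ while $\sqrt{q(n-q)}>4$, and finish with $4q(n-q)\le n^{2}$, with equality if and only if $q=n/2$. You are in fact more careful than the paper at the one delicate step, locating \emph{both} extreme eigenvalues of $G$ inside $\sigma(M)$: the paper merely asserts $2-\sqrt{q(n-q)}<-2$ ``taking into account the values of $p$, $q$ and $n$,'' whereas you derive $q(n-q)\ge 18$ from $q\ge 3$ and $n-q\ge p+3\ge 6$; your alternative check via Theorem~\ref{FinalResultTh} (the correction term vanishes because $s(C_m)\le 4<\sqrt{q(n-q)}$) is also sound.

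The problem is your last paragraph, and you should state its consequence plainly instead of hedging. Your degree computation is right: vertices of $G_1$ and $G_3$ have degree $q+2$, vertices of $G_2$ have degree $n-q+2$, so $G$ is regular \emph{exactly} when $q=n/2$. This does not ``drive the non-regularity of the construction''; it contradicts the theorem. The statement claims $G$ is unconditionally non-regular and that $s(G)=n$ iff $q=n/2$, but in that equality case $G$ is $(2+n/2)$-regular, so the non-regularity assertion fails precisely in the only case the section cares about. The paper's proof hides this by declaring non-regularity ``immediate,'' which is false for $q=n/2$; your computation exposes the error, but as written your proposal neither proves the stated claim (it cannot be proved) nor flags the contradiction. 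Note also that for $q=n/2$ the graphs produced are regular with disconnected complement ($\overline{G}$ has no edges between $V(G_2)$ and the rest), so $s(G)=n$ for them is already given by Theorem~\ref{kresult}; consequently the advertised ``infinite family of non-regular graphs of order $n$ with spread $n$'' does not actually arise from this construction. The correct repaired statement is: $s(G)\le n$ always, $G$ is non-regular iff $q\ne n/2$, and $s(G)=n$ iff $q=n/2$ (in which case $G$ is regular).
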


\begin{proof}
By definition of generalized join, it is immediate that $G$ is non regular. By Theorem~\ref{teorema2}
$$
\sigma(G) =  \bigcup_{i=1}^{3}\left(\sigma(G_i) \setminus \{2\}\right) \cup \{\beta_1,\beta_2, \beta_3\},
$$
where $\beta_i,$ with $i\in \{1,2,3\},$ are the roots of the characteristic polynomial of the matrix
$$
M = \left(\begin{array}{ccc}
            2&     q    &     0 \\
            p &    2    &  n-p-q \\
            0 &    q    & 2
          \end{array}
    \right).
$$
Then $\beta_1 = 2, \beta_2=2+ \sqrt{q(n-q)},$ and $\beta_3=2- \sqrt{q(n-q)}.$ Notice that the largest eigenvalue of $M$ is
$\beta_2$ and $\lambda_{min}(G) = \beta_3=2- \sqrt{q(n-q)} < -2$ (taking into account the values of $p$, $q$ and $n$
and since $\lambda_{min}(G_i) \ge -2$, for $i=1, 2, 3$). Therefore,
$$
s(G) = 2 \sqrt{q(n-q)}.
$$
Since $q(n-q) \le \frac{n^2}{4}$ and $q(n-q) = \frac{n^2}{4}$ if and only if $q=\frac{n}{2}$, the result follows.
\end{proof}

As immediate consequence of Theorem~\ref{families_with_spread_n}, if $n$ is an even positive number not less than $12$,
$q=\frac{n}{2}$ and $3 \le p \le \frac{n-6}{2}$ then the graph $G$ defined in \eqref{generalized_join_family} is such
that $s(G)=n$.

\end{document}